\newtheorem{theorem}{Theorem}[section]
\newtheorem{lemma}[theorem]{Lemma}
\newtheorem{notation}[theorem]{Notation}
\theoremstyle{definition}
\newtheorem{definition}[theorem]{Definition}
\newtheorem{problem}[theorem]{Problem}
\newcommand{\com}[1]{{\color{blue} #1}}
\newcommand{\mc}{\mathcal}
\newcommand{\Z}{\mathbb{Z}}
\newcommand{\B}{\mathcal{B}}
\newcommand{\D}{\mathcal{D}}
\newcommand{\C}{\mathcal{C}}
\newcommand{\T}{\mathcal{T}}
\renewcommand{\L}{\mathcal{L}}
\newcommand{\e}{\epsilon}
\renewcommand{\l}{\lambda}
\renewcommand{\S}{\mathcal{S}}
\renewcommand{\P}{\mathcal{P}}
\newcommand{\len}{\textrm{len}}
\def \deg {{\rm deg}}
\def \Circ {{\rm Circ}}
\def \OP {{\rm OP}}
\def \arank {{\rm arank}}
\def \diag {{\rm diag}}
\def \floor#1{\lfloor#1\rfloor}
\def \Nbd {{\rm Nbd}}
\def \sgn {{\rm sgn}}
\def \leq {\leqslant}
\def \geq {\geqslant}
\def \F {\mathcal{F}}
\def \mod{\pmod}
\definecolor{color1}{rgb}{0,0, 0.0}
\definecolor{color2}{rgb}{0.55, 0.55, 0.55}
\tikzstyle{vertex}=[circle, draw=black, fill=black, minimum size=2pt, inner sep=2]
\newcommand{\midarrow}{\tikz \draw[-{Stealth[scale=1.1]}] (0,0) -- (0.1,0);}
\tikzset{
  on each segment/.style={
    decorate,
    decoration={
      show path construction,
      moveto code={},
      lineto code={
        \path [#1]
        (\tikzinputsegmentfirst) -- (\tikzinputsegmentlast);
      },
      curveto code={
        \path [#1] (\tikzinputsegmentfirst)
        .. controls
        (\tikzinputsegmentsupporta) and (\tikzinputsegmentsupportb)
        ..
        (\tikzinputsegmentlast);
      },
      closepath code={
        \path [#1]
        (\tikzinputsegmentfirst) -- (\tikzinputsegmentlast);
      },
    },
  },
  mid arrow/.style={postaction={decorate,decoration={
        markings,
        mark=at position .5 with {\arrow[scale=1.2,#1]{stealth}}
      }}},
}
\tikzset{
    set arrow inside/.code={\pgfqkeys{/tikz/arrow inside}{#1}},
    set arrow inside={end/.initial=>, opt/.initial=},
    /pgf/decoration/Mark/.style={
        mark/.expanded=at position #1 with
        {
            \noexpand\arrow[\pgfkeysvalueof{/tikz/arrow inside/opt}]{\pgfkeysvalueof{/tikz/arrow inside/end}}
        }
    },
    arrow inside/.style 2 args={
        set arrow inside={#1},
        postaction={
            decorate,decoration={
                markings,Mark/.list={#2}
            }
        }
    },
}
\newcommand{\comment}[1]{{\color{RoyalPurple} #1}}
\newcommand{\change}[2]{{\color{ForestGreen} #1}{\color{BrickRed} #2}}
\let\oldproofname=\proofname
\renewcommand{\proofname}{\rm\bf{\oldproofname}}
\title{Completing the solution of the directed Oberwolfach problem with two tables}
\author{D. Horsley\footnote{Email: daniel.horsley@monash.edu. Mailing address: School of Mathematical Sciences, Monash University, 9 Rainforest Walk, Clayton, VIC, 3800, Australia.}, Monash University \\   A. Lacaze-Masmonteil\footnote{Email: alaca054@uottawa.ca. Mailing address: Department of Mathematics and Statistics, University of Ottawa,150 Louis-Pasteur Private, Ottawa, ON, K1N 9A7, Canada.}, University of Ottawa }
\begin{document}
\maketitle \baselineskip 17pt

\begin{center}
{\bf Abstract}
\end{center}

We address the last outstanding case of the directed Oberwolfach problem with two tables of different lengths. Specifically, we show that the complete symmetric directed graph $K^*_n$ admits a decomposition into spanning subdigraphs comprised of two vertex-disjoint directed cycles of length $t_1$ and $t_2$, respectively, where $t_1\in \{4,6\}$, $t_2$ is even, and $t_1+t_2\geqslant 14$. In conjunction with recent results of Kadri and Šajna, this gives a complete solution to the directed Oberwolfach problem with two tables of different lengths.

\medskip
\noindent {\bf Keywords}: Directed Oberwolfach problem; directed 2-factorization; complete symmetric directed graph.

\section{Introduction}\label{S:intro}

In this paper, we investigate a variation of the famous Oberwolfach Problem (OP). Introduced by Ringel \cite{Intro} in 1967, the $\OP(t_1, t_2, \ldots, t_s)$ poses the following question: given $n=2k+1$ people and $t$ round tables that respectively seat $t_1, t_2, \ldots , t_s$ people, where $t_1+t_2+\cdots+t_s=n$ and $t_i \geqslant 3$, does there exist a set of $k$ seating arrangements such that each person sits beside every other person precisely once? This problem can be formulated as a graph-theoretic problem by considering the question of existence of a 2-factorization of the complete graph $K_n$ such that each 2-factor is comprised of cycles of lengths $t_1, t_2, \ldots, t_s$. In \cite{HuaKot}, the OP was adapted to consider the case where $n$ is even. In that case, the existence of a 2-factorization of $K_n-I$  is considered, where  $K_n-I$ is the complete graph with the edges of a 1-factor removed. Constructive solutions to the OP have been found in each of the following cases: cycles of uniform length \cite{AlsHag, AlsSch, HofSch, HuaKot}, two cycles \cite{2tabodd, traetta}, any combination of cycles of even length \cite{BryDan, Hag}, and $n \leqslant 60$ \cite{18to40, 13ver, 13ver1, 60.}. Constructive solutions to the OP have also been found for several infinite families of cases in \cite{AlsBry} and \cite{BryScha}. Lastly, it has also been shown non-constructively that a solution to the OP exists for all sufficiently large $n$ \cite{solved}.

The directed Oberwolfach problem ($\OP^*(t_1, t_2, \ldots, t_s))$ considers a similar scenario. This time, we let $t_i \geqslant 2$ and we seek $n-1$ seating arrangements with the added property that each guest is to be seated to the right of every other guest exactly once. If all $s$ tables are of the same length $t$, we write $\OP^*(t; s)$. When $n$ is odd, one can easily construct a solution to the $\OP^*(t_1, t_2, \ldots, t_s)$ from a solution to the $\OP(t_1, t_2, \ldots, t_s)$. Therefore it suffices to consider the $\OP^*(t_1, t_2, \ldots, t_s)$ for $n$ even in those cases where a solution to the $\OP(t_1, t_2, \ldots, t_s)$ is known.


Recently, the last open case of the $\OP^*(t; s)$ was settled \cite{LacMas}. Thus we have a constructive proof of Theorem \ref{thm:RCMD} below.

\begin{theorem}[\cite{Abel, AdaBry, BenZha, BerGerSot, BurFranSaj, BurSaj, LacMas, Til}]
\label{thm:RCMD}
Let $s$ and $t$ be positive integers such that $t s$ is even. The  $\OP^*(t; s)$ has a solution if and only if $(s, t) \not\in \{(1,6), (1, 4),$ $(2, 3) \}$.
\end{theorem}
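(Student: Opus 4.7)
The plan is to separate the two implications and, for sufficiency, to split by the parities of $s$ and $t$.

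For necessity, I would verify directly that none of $(s,t)\in\{(1,4),(1,6),(2,3)\}$ admits a solution. The case $(1,4)$ amounts to decomposing $K^*_4$ into three arc-disjoint directed Hamilton $4$-cycles, which a short case analysis rules out. The case $(1,6)$ is the well-known Tillson exception: $K^*_6$ has no Hamilton directed-cycle decomposition. The case $(2,3)$ asks for $K^*_6$ to be partitioned into five $2$-factors each consisting of two vertex-disjoint directed triangles, which a similar finite check disposes of.

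For sufficiency I would dispatch the remaining pairs by a sequence of constructions. When $t=2$ the problem reduces to a $1$-factorization of $K^*_n$ into pairs of opposite arcs, obtained by a round-robin scheme. When $s=1$ and $t$ is even with $t\notin\{4,6\}$, Tillson's Hamilton decomposition theorem for $K^*_t$ applies directly. When $t\geq 5$ is odd and $s$ is even, I would orient each $2$-factor of a known solution of the undirected $\OP(t;s)$ in both rotational directions to obtain the required directed $2$-factorization. When $t\geq 4$ is even and $s\geq 2$, I would look for a starter $2$-factor of $s$ vertex-disjoint directed $t$-cycles on $\Z_n$ whose arc-differences cover $\Z_n\setminus\{0\}$ exactly once, and then develop it cyclically. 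When $t=3$ and $s\geq 4$ is even, resolvable directed triple systems (Mendelsohn systems) supply the construction.

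The main obstacle is the small-$t$ regime, namely $t\in\{4,6\}$ with $s\geq 2$. For such short cycles the cyclic starter technique is too rigid---each cycle offers only $t$ arc-differences, so there is very little flexibility to cover all nonzero elements of $\Z_n$ exactly once---and one must instead piece together the required factorizations from carefully chosen building blocks and verify a finite list of base cases individually, often by computer search. This is precisely the difficulty addressed in \cite{LacMas} and in the present paper, whose combination completes the classification.
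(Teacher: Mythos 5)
First, a point of comparison: the paper does not prove Theorem \ref{thm:RCMD} at all; it is quoted as a known result assembled from the cited references (Tillson for $s=1$, the resolvable Mendelsohn-design papers for small $t$, Burgess--Šajna and Burgess--Francetić--Šajna for general $t$, and \cite{LacMas} for the last open cases), so the only fair benchmark is that literature. Your necessity checks and several of your sufficiency cases ($t=2$ via a $1$-factorization, $s=1$ via Tillson, $t=3$ via resolvable Mendelsohn designs) are consistent with how those papers proceed, and you correctly identify $t\in\{4,6\}$, $s\geq 2$ as the hard regime settled in \cite{LacMas}; but at that point your argument becomes a pointer to the literature rather than a proof, which is exactly the status the theorem has in the paper itself.

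The genuine gap is your treatment of odd $t\geq 5$ with $s$ even. The hypothesis forces $ts$ even, so the order $n=ts$ is even in \emph{every} case of this theorem, and the classical doubling argument you invoke is therefore never available: a solution of the undirected $\OP(t;s)$ for even $n$ is a $2$-factorization of $K_n-I$, not of $K_n$, and orienting each $2$-factor in both rotational directions yields only $n-2$ directed $2$-factors of $K^*_n$, leaving uncovered the $n$ arcs coming from the deleted $1$-factor $I$; these form $n/2$ digons and cannot be arranged into a directed $2$-factor made of $t$-cycles with $t\geq 3$. This is precisely why the directed problem for even $n$ is not a corollary of the undirected one and why \cite{BurFranSaj} had to treat the odd-cycle case by separate constructions. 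A similar counting mismatch affects your cyclic starter for even $t$: a directed $2$-factor has $n$ arcs while $\mathbb{Z}_n\setminus\{0\}$ has only $n-1$ differences, and developing one starter over $\mathbb{Z}_n$ gives $n$ translates where only $n-1$ pairwise arc-disjoint factors are needed, so the naive difference method must be modified (for instance by working modulo $n-1$ with a fixed vertex, or by the wreath-product/Häggkvist-style assembly used in \cite{BurSaj,LacMas}) before it yields a proof.
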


Naturally, the next step is to consider the case with cycles of varying length. The only result on this more general case of the $\OP^*$ when $n$ is even can be found in  \cite{SajKad} in which Kadri and Šajna use a recursive approach to obtain several infinite families of solutions. One of the key results of \cite{SajKad}  is a near-complete constructive solution to the directed Oberwolfach problem with two cycles of varying lengths formulated in Theorem \ref{thm:SajKad2} below.

\begin{theorem}[\cite{SajKad}]
\label{thm:SajKad2}
Let $t_1$ and $t_2$ be integers such that $2 \leqslant t_1<t_2$. Then the $\OP^*(t_1, t_2)$ has a solution if and only if $(t_1, t_2) \neq (3,3)$ with a possible exception in the case where $t_1 \in \{4,6\}$, $t_2$ is even, and $t_1+t_2\geqslant 14$.
\end{theorem}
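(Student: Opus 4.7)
The plan is to complete Theorem \ref{thm:SajKad2} by resolving its only outstanding case: solving $\OP^*(t_1,t_2)$ for $t_1\in\{4,6\}$, $t_2$ even, and $n=t_1+t_2\geqslant 14$. The remaining cases are settled in \cite{SajKad}, so the work is to handle this one family. Note that $n$ is even and hence $n-1$ is odd, which makes a cyclic starter construction natural.

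I would take the vertex set of $K_n^*$ to be $\Z_{n-1}\cup\{\infty\}$ and let $\Z_{n-1}$ act on the finite vertices by translation, fixing $\infty$. A 2-factor $F$ of $K_n^*$ consisting of a $\vec C_{t_1}$ and a $\vec C_{t_2}$ is a \emph{starter} for a $\Z_{n-1}$-regular 2-factorization provided that each $d\in\Z_{n-1}\setminus\{0\}$ appears as the signed difference $j-i$ of exactly one finite-to-finite arc $(i,j)$ of $F$, and that $F$ contains exactly one arc with tail $\infty$ and one with head $\infty$. The orbit of such a starter then produces the required $n-1$ 2-factors, so the problem reduces to exhibiting one starter per admissible $(t_1,t_2)$.

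I would place $\infty$ on the large cycle $\vec C_{t_2}$, so that the small cycle $\vec C_{t_1}$ lies entirely in $\Z_{n-1}$ and contributes $t_1$ differences while $\vec C_{t_2}$ contributes the remaining $t_2-2$ differences via its finite-to-finite arcs (its two $\infty$-arcs do not carry differences). Accordingly, I would first fix a compact "small" difference set $D_1\subset\Z_{n-1}\setminus\{0\}$ of size $t_1$ (for example $\{\pm 1,\pm 2\}$ when $t_1=4$, with an analogous block when $t_1=6$) and realize $\vec C_{t_1}$ explicitly on the vertices $0,d_1,d_1+d_2,\ldots$ obtained from a cyclic ordering of $D_1$. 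The large cycle is then built by sequencing the complementary set $D_2=\Z_{n-1}\setminus(\{0\}\cup D_1)$ in an order whose partial sums trace a Hamilton path on $\Z_{n-1}\setminus V(\vec C_{t_1})$ between two prescribed endpoints, which are then joined through $\infty$. Natural parity considerations make it likely that the argument splits into a small number of subcases according to $t_1$ and to the residue of $t_2\pmod 4$.

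The principal obstacle is the last step: producing, in each subcase, a cyclic/linear ordering of $D_2$ whose partial sums avoid the small cycle's vertex set, visit each remaining finite vertex exactly once, and terminate at the desired endpoints. This is a Skolem- or Langford-style sequencing problem, and while a uniform parametric construction should work for all sufficiently large $n$ within each subcase, a handful of small instances (plausibly those with $n\in\{14,16,18\}$, and a few more with $t_1=6$) will likely resist the generic pattern and need to be handled by ad hoc starters verified directly, either by hand or by a short exhaustive search. Once a starter has been produced in every case, its $\Z_{n-1}$-orbit decomposes $K_n^*$ into the required 2-factors, and combining this with the non-exceptional portion of \cite{SajKad} completes the proof of Theorem \ref{thm:SajKad2}.
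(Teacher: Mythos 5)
Your proposal is aimed at the wrong target. Theorem \ref{thm:SajKad2} is a result of Kadri and Šajna quoted from \cite{SajKad}; this paper does not prove it, and its statement explicitly \emph{allows} the cases $t_1 \in \{4,6\}$, $t_2$ even, $t_1+t_2 \geqslant 14$ as possible exceptions. So a proof of Theorem \ref{thm:SajKad2} does not require resolving those cases at all --- what it requires is the recursive construction of \cite{SajKad}, which builds $(\vec{C}_{t_1},\vec{C}_{t_2})$-factorizations from solutions of $\OP^*(t;1)$ (Theorem \ref{thm:RCMD}); your plan reproduces none of that. What you are actually proposing is a proof of the exceptional cases, i.e.\ of Theorem \ref{thm:twotab} (and hence Theorem \ref{thm:final}), which is the new content of this paper, not of the cited statement.

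Even read as an attempt at Theorem \ref{thm:twotab}, there is a genuine gap: the entire difficulty is concentrated in the step you defer. Setting up a $1$-rotational framework on $\mathds{Z}_{n-1}\cup\{\infty\}$ and observing that a starter $2$-factor covering each nonzero difference once (plus one arc into and out of $\infty$) would generate the factorization is routine bookkeeping; the theorem is proved only once you exhibit such starters, and your text replaces this with ``a uniform parametric construction should work'' and ``a handful of small instances \ldots verified directly''. No such sequencing of the complementary difference set is constructed, no induction or parametric family is given, and no obstruction analysis is done, so nothing is actually proved. The paper's experience is instructive here: rather than a starter method, it reduces $K^*_{2m}$ to the sparse graph $W^*_{2m}$ via H\"aggkvist's Lemma (Lemmas \ref{L:Hag}--\ref{L:reduction}) and then needs eight separate families of $(t_1,q)$-base tuples (Definition \ref{defn:base2.2}, Lemmas \ref{lem:red.2} and \ref{lem:red2}), found by hand with computer assistance, plus four ad hoc factorizations for $(t_1,t_2)\in\{(4,12),(6,8),(6,10),(6,12)\}$. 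That the authors could not compress this into a clean uniform pattern is strong evidence that the combinatorial core you leave open is not a formality, and until explicit starters (or an equivalent construction) are supplied for every admissible pair, your argument establishes neither Theorem \ref{thm:SajKad2} nor Theorem \ref{thm:twotab}.
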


The recursive approach used to prove Theorem \ref{thm:SajKad2} relies on the existence of a solution to $\OP^*(t; 1)$. However, it is known from Theorem \ref{thm:RCMD} that no such decomposition exists when $t_1 \in \{4,6\}$. Therefore, the methods of \cite{SajKad} cannot be used to construct a solution to the $\OP^*(t_1, t_2)$ when $t_1 \in \{4,6\}$ and $t_2$ is even.

Here we complement the results of Theorem \ref{thm:SajKad2} and complete the solution of the directed Oberwolfach problem with two tables.

\begin{theorem}
\label{thm:twotab}
Let $t_1$ and $t_2$ be positive even integers such that $t_1 \in \{4,6\}$ and $t_1+t_2 \geqslant 14$. Then the $\OP^*(t_1, t_2)$ has a solution.
\end{theorem}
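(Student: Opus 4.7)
The plan is to attack the remaining cases via a difference-method (starter) construction on the vertex set $V = \Z_{n-1} \cup \{\infty\}$, where $n = t_1+t_2$. Since both $t_1$ and $t_2$ are even, $n$ is even and $n-1$ is odd. I would look for a single \emph{starter} directed $2$-factor $F$ of type $(t_1,t_2)$ such that the arcs of $F$ with both ends in $\Z_{n-1}$ realize every nonzero element of $\Z_{n-1}$ exactly once as a directed difference, and $F$ contains exactly one arc entering $\infty$ and one leaving $\infty$. Translating $F$ by the $n-1$ elements of $\Z_{n-1}$ (fixing $\infty$) then yields $n-1$ directed $2$-factors of type $(t_1,t_2)$ whose arcs partition $K_n^*$, as required.

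\textbf{Where $\infty$ sits.} A short parity argument shows that $\infty$ must lie on the large cycle. Since $n-1$ is odd, $\sum_{d\in \Z_{n-1}\setminus\{0\}} d \equiv 0 \pmod{n-1}$. If instead $\infty$ lay on the small cycle $(\infty,a_1,\ldots,a_{t_1-1},\infty)$, its $t_1-2$ finite arcs would contribute differences summing to $a_{t_1-1}-a_1\neq 0$, while the large cycle would contribute $0$ (being entirely in $\Z_{n-1}$), contradicting the required total of $0$. Hence the $t_1$-cycle lies entirely in $\Z_{n-1}$, and both $\infty$-arcs are absorbed by the $t_2$-cycle.

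\textbf{Construction.} For each $t_1\in\{4,6\}$ and each admissible even $t_2$, I would construct the starter in two pieces. First, a short directed $t_1$-cycle $C$ on a fixed subset of $\Z_{n-1}$ whose $t_1$ arc differences form a chosen set $D_C$ summing to $0$; for example, when $t_1=4$ one can take $(0,1,3,2,0)$ with $D_C=\{1,2,-1,-2\}$. Second, a directed $t_2$-cycle of the form $\infty\to v_1\to v_2\to\cdots\to v_{t_2-1}\to\infty$, in which $v_1,\ldots,v_{t_2-1}$ is an ordering of $\Z_{n-1}\setminus V(C)$ whose $t_2-2$ consecutive differences exhaust $\Z_{n-1}\setminus(\{0\}\cup D_C)$. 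The values of $v_1$ and $v_{t_2-1}$ are otherwise free, since the two $\infty$-arcs contribute independently to the starter conditions. A finite number of parametric templates, indexed by the residue of $t_2$ modulo a small integer, should cover all admissible $t_2$.

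\textbf{Main obstacle.} The hard part will be producing the ordering $v_1,\ldots,v_{t_2-1}$: its consecutive differences must form a prescribed $(t_2-2)$-subset of $\Z_{n-1}\setminus\{0\}$, its partial sums must be pairwise distinct, and those sums must avoid $V(C)$. I would begin from the standard zigzag sequence $0,1,-1,2,-2,3,-3,\ldots$ in $\Z_{n-1}$, whose consecutive differences form $1,-2,3,-4,\ldots$, and then splice in a short correction segment to skip the vertices of $V(C)$ and to swap in exactly those differences not already used by $C$. Verifying that the splice remains injective on partial sums and covers the right difference set, uniformly across the relevant congruence classes of $t_2$, is where the careful bookkeeping lies. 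Finally, the smallest admissible cases $(t_1,t_2)\in\{(4,10),(4,12),(6,8),(6,10),(6,12)\}$, where the general template may degenerate, would be handled by explicit ad-hoc starters (possibly aided by a short computer search), completing the proof.
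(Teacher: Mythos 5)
Your construction cannot exist, and your own counting argument shows why. With the full group $\Z_{n-1}$ acting on $V=\Z_{n-1}\cup\{\infty\}$ and a single starter $F$, each orbit of arcs inside $\Z_{n-1}$ has size $n-1$ and consists exactly of the arcs of one fixed difference, so $F$ must indeed use each of the $n-2$ nonzero differences exactly once; consequently the differences on the finite arcs of $F$ must sum to $\sum_{d\in\Z_{n-1}\setminus\{0\}}d=0$, since $n-1$ is odd. On the other hand, telescoping along the cycles shows that this sum equals $a-b$, where $a$ is the in-neighbour and $b$ the out-neighbour of $\infty$: the cycle avoiding $\infty$ contributes $0$, and the cycle through $\infty$, written $\infty\to b\to\cdots\to a\to\infty$, contributes $a-b$. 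Because the cycle through $\infty$ has length $t_2\geq 8$, we have $a\neq b$, so the sum is nonzero --- which is precisely the contradiction you invoked to rule out placing $\infty$ on the short cycle. Your parity argument applies verbatim to the long cycle as well, so no starter of the kind you describe exists for any admissible $(t_1,t_2)$, and the zigzag/splicing machinery that follows has nothing to build on. (Separately, even granting the framework, the proposal exhibits no starters and verifies nothing; it is a plan rather than a proof.)

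Any repair must abandon the one-starter, $\Z_{n-1}$-rotational framework, for instance by using a group action with shorter orbits or several base factors. That is essentially what the paper does: it decomposes $K^*_{2m}$ into copies of $(C_m\wr E_2)^*$, which are handled by Häggkvist's lemma, plus one copy of the graph $W^*_{2m}$, and then factorizes $W^*_{2m}$ using seven or nine explicitly given base tuples whose pieces are translated only by the even shifts $\rho^j$. Because the factorization is generated by several base 2-factors under a much smaller action (and the two ``cycle ends'' are re-glued through the dipaths $Q$, $R$, $S$, $T$ rather than through a fixed point $\infty$), the difference-sum obstruction above never arises there.
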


Theorems \ref{thm:SajKad2} and \ref{thm:twotab} jointly imply a complete constructive solution to the $\OP^*(t_1, t_2)$ stated below.

\begin{theorem}
\label{thm:final}
Let $t_1$ and $t_2$ be integers such that $2 \leqslant t_1\leqslant t_2$. Then the $\OP^*(t_1, t_2)$ has a solution if and only if $(t_1, t_2) \neq (3,3)$.
\end{theorem}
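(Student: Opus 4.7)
The plan is to derive Theorem \ref{thm:final} as a bookkeeping combination of the three results already stated in the paper: Theorems \ref{thm:RCMD}, \ref{thm:SajKad2}, and \ref{thm:twotab}. Since each of these carries its own exception list, the whole argument is a short case analysis whose job is to verify that (i) the only nonexistence is at $(3,3)$ and (ii) the exceptional regions of Theorems \ref{thm:RCMD} and \ref{thm:SajKad2} are filled in either by each other or by Theorem \ref{thm:twotab}.

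First I would dispatch necessity. The only pair that must be excluded is $(t_1,t_2)=(3,3)$, and this is precisely the forbidden triple $(s,t)=(2,3)$ appearing in Theorem \ref{thm:RCMD}; hence $\OP^*(3,3)$ admits no solution.

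For sufficiency, fix integers with $2 \leq t_1 \leq t_2$ and $(t_1,t_2)\neq(3,3)$, and split on whether $t_1 = t_2$. If $t_1 = t_2 = t$, then $\OP^*(t_1,t_2)$ is the uniform instance $\OP^*(t;2)$; here $ts = 2t$ is automatically even, and the forbidden triples $(s,t)\in\{(1,6),(1,4),(2,3)\}$ of Theorem \ref{thm:RCMD} reduce under $s = 2$ to the single pair $(t_1,t_2)=(3,3)$, which we have already excluded. Thus Theorem \ref{thm:RCMD} supplies a decomposition. If instead $t_1 < t_2$, Theorem \ref{thm:SajKad2} delivers a solution unless $(t_1,t_2)$ lies in its possible-exception family, namely $t_1 \in \{4,6\}$, $t_2$ even, and $t_1+t_2 \geq 14$; but exactly this family is handled by Theorem \ref{thm:twotab}.

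No step should be difficult: the substantive content lives inside the three cited theorems. The only thing to check carefully is that the exception regions align on the nose, with two subtleties worth flagging. First, Theorem \ref{thm:SajKad2} is stated with strict inequality $t_1 < t_2$, so the diagonal $t_1 = t_2$ must be deferred to Theorem \ref{thm:RCMD} rather than treated as an instance of Theorem \ref{thm:SajKad2}. Second, one must confirm that the hypotheses of Theorem \ref{thm:twotab}, namely $t_1 \in \{4,6\}$, $t_2$ even and $t_1 + t_2 \geq 14$, match verbatim the exception clause of Theorem \ref{thm:SajKad2}, so that no value of $(t_1,t_2)$ falls through the cracks of the case split.
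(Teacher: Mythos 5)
Your proposal is correct and follows essentially the same route as the paper, which simply asserts that Theorems \ref{thm:SajKad2} and \ref{thm:twotab} jointly imply Theorem \ref{thm:final} without writing out the case analysis. Your explicit appeal to Theorem \ref{thm:RCMD} with $s=2$ for the diagonal case $t_1=t_2$ (and for necessity at $(3,3)$) is in fact slightly more careful than the paper's one-line justification, which leaves that step implicit.
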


This paper is structured as follows.  In Section \ref{S:prem}, we give key definitions. Then, in Section \ref{S:strategy}, we take a reduction step by showing that it suffices to find particular 2-factorizations of a class of sparser digraphs. Next, in Section \ref{S:main}, we describe the ingredients needed to obtain the desired 2-factorizations and prove that these indeed give rise to the appropriate solutions of the directed Oberwolfach problem. We conclude by constructing the desired set of ingredients required to form the directed 2-factorizations we need.

\section{Key definitions} \label{S:prem}

We make the standard assumption that all directed graphs (digraphs for short) are strict. This means that digraphs do not contain loops or parallel arcs. If $G$ is a digraph (graph), we shall denote its vertex set as $V(G)$ and its arc set (edge set) as $A(G)$ ($E(G)$), respectively. For any graph $G$, let $G^*$ denote the digraph with vertex set $V(G)$ and arc set $\{(x,y),(y,x):\{x,y\} \in E(G)\}$. Let $K^*_n$ denote the complete symmetric digraph on $n$ vertices and let $\vec{C}_m$ denote the directed cycle on $m$ vertices. Let $E_m$ denote the undirected graph with $m$ vertices and no edges.

The length of a directed path (dipath for short) or a directed cycle refers to the number of arcs it has. For a dipath $P$, we denote its length as $\len(P)$. Moreover, the \textit{source} of a dipath $P$ is the vertex with in-degree 0 and is denoted $s(P)$, while the \textit{terminal} of $P$ is the vertex with out-degree 0 and is denoted $t(P)$.

Let $G$ be a digraph. A \textit{decomposition} of a  $G$ is a set $\{H_1, H_2, \ldots, H_r\}$ of pairwise arc-disjoint subdigraphs of $G$ such that $A(G)=A(H_1) \cup A(H_2) \cup \cdots \cup A(H_r)$. A spanning subdigraph of $G$ that is a disjoint union of directed cycles of $G$ is a \textit{directed 2-factor} of $G$. A \textit{$(\vec{C}_{t_1}, \vec{C}_{t_2}, \ldots, \vec{C}_{t_s})$-factor} of $G$ is a directed 2-factor that is the disjoint union of $s$ directed cycles of lengths $t_1, t_2, \ldots, t_s$. A \textit{bipartite directed  2-factor} of digraph $G$ is a directed 2-factor comprised of directed cycles of even lengths. If $H$ is a spanning subdigraph of $G$ and $G$ admits a decomposition into subdigraphs isomorphic to $H$, then this decomposition is called an \textit{$H$-factorization}. In particular, a \textit{$(\vec{C}_{t_1}, \vec{C}_{t_2}, \ldots, \vec{C}_{t_s})$-factorization} of $G$ is a decomposition of $G$ into $(\vec{C}_{t_1}, \vec{C}_{t_2}, \ldots, \vec{C}_{t_s})$-factors. A \textit{directed 2-factorization} of $G$ is a decomposition of $G$ into directed 2-factors.  All these terms can be analogously defined for undirected graphs.

We now formulate the $\OP^*(t_1, t_2, \ldots, t_s)$ in graph-theoretic terms.

\begin{problem} [$\OP^*(t_1, t_2, \ldots, t_s)$]
For integers $2\leqslant t_1 \leqslant t_2 \leqslant \cdots \leqslant t_s$ such that $t_1+t_2+\cdots+t_s=n$,  does $K^*_n$ admit a $(\vec{C}_{t_1}, \vec{C}_{t_2}, \ldots, \vec{C}_{t_s})$-factorization?
\end{problem}

To prove Theorem \ref{thm:twotab}, we construct a $(\vec{C}_{t_1}, \vec{C}_{t_2})$-factorization of $K^*_n$ when $n=t_1+t_2$, $t_1 \in \{4,6\}$, $t_2$ is even, and $n\geqslant 14$. 

We conclude this section with a pair of definitions that are used to construct the desired $(\vec{C}_{t_1}, \vec{C}_{t_2})$-factorizations of $K^*_n$.

\begin{definition}
For graphs $G$ and $H$, the \emph{wreath product of $G$ with $H$}, denoted $G \wr H$, is the graph with vertex set $V(G) \times V(H)$ in which $(g_1,h_1)$ and $(g_2,h_2)$ are adjacent if and only if either $g_1g_2 \in E(G)$ or $g_1=g_2$ and $h_1h_2 \in E(H)$.
\end{definition}

\begin{definition}
For a subset $S$ of $\{1,\ldots,\lfloor\frac{n}{2}\rfloor\}$,  the \emph{circulant of order $n$ with connection set $S$}, denoted $\Circ(n,S)$, is the graph with vertex set $\mathbb{Z}_n$ and edge set $\{\{i,i+s\}:i \in \mathbb{Z}_n,s \in S\}$ with addition performed modulo $n$.
\end{definition}


\section{Overall strategy} \label{S:strategy}

This section details the overall strategy we follow to prove Theorem \ref{thm:twotab}. Our primary objective is to demonstrate that, in order to construct the desired  $(\vec{C}_{t_1}, \vec{C}_{t_2})$-factorization of $K^*_n$, it suffices to construct a $(\vec{C}_{t_1}, \vec{C}_{t_2})$-factorization of a sparser digraph that only requires seven or nine $(\vec{C}_{t_1}, \vec{C}_{t_2})$-factors.

Crucial to our approach is the following immediate consequence of a lemma of Häggkvist \cite{Hag}.

\begin{lemma}[\cite{Hag}]\label{L:Hag}
Let $D$ be a bipartite directed $2$-factor of order $2m$ comprised of directed cycles of length at least $4$. The digraph  $(C_m \wr E_2)^*$ admits a $D$-factorization.
\end{lemma}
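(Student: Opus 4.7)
The lemma is presented as an immediate consequence of a result of Häggkvist, so my plan is to deduce it from the undirected version of his lemma: namely, that $C_m \wr E_2$ decomposes into two edge-disjoint copies of $F$ whenever $F$ is a bipartite $2$-factor of order $2m$ all of whose cycles have length at least $4$.

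Let $F$ denote the underlying (undirected) graph of $D$. Since $D$ is a bipartite directed $2$-factor of order $2m$ with all directed cycles of length at least $4$, the graph $F$ is a bipartite $2$-factor on $2m$ vertices with all cycles of length at least $4$, which is exactly the hypothesis needed for Häggkvist's undirected lemma. Applying it yields an edge-decomposition $C_m \wr E_2 = F_1 \cup F_2$ with $F_1 \cong F_2 \cong F$. Since each edge of $C_m \wr E_2$ corresponds to a pair of oppositely directed arcs in $(C_m \wr E_2)^*$, this edge-decomposition lifts to the arc-decomposition $(C_m \wr E_2)^* = F_1^* \cup F_2^*$.

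To finish, for each $i \in \{1,2\}$ I would orient each cycle of $F_i$ as a directed cycle in such a way that the resulting directed $2$-factor $\vec{F}_i$ realizes the same multiset of directed cycle lengths as $D$; this is possible because the cycle lengths of $F_i$ already match those of $F$ and hence of $D$. Thus $\vec{F}_i \cong D$. Then $F_i^*$ is the arc-disjoint union of $\vec{F}_i$ and its reverse $\vec{F}_i^{-1}$, and the reverse of a directed cycle of length $\ell$ is again isomorphic to $\vec{C}_\ell$, so $\vec{F}_i^{-1} \cong D$ as well. Taking both values of $i$ produces four arc-disjoint copies of $D$ whose union is $(C_m \wr E_2)^*$, the desired $D$-factorization.

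The argument is essentially bookkeeping once the undirected Häggkvist lemma is in hand; there is no substantive obstacle. The only subtle point worth checking is that reversing a directed cycle preserves its isomorphism type, which ensures both oriented halves of each $F_i^*$ really do realize $D$.
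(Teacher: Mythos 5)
Your proof is correct and follows essentially the same route as the paper: apply H\"aggkvist's undirected lemma to the underlying $2$-factor $F$ of $D$, pass to the symmetric digraph, and split each copy of $F^*$ into two copies of $D$ by orienting the cycles and taking the reverse orientation. The paper compresses the final splitting step into a single sentence, but the content is the same as your explicit bookkeeping.
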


\begin{proof}
Let $F$ be the 2-factor obtained from $D$ by replacing  each arc $(x,y)$ with an undirected edge $\{x,y\}$. By the first lemma of \cite{Hag}, commonly known as Häggkvist's Lemma, there is an $F$-factorization $\F$ of $C_m \wr E_2$. Thus there is an $F^*$-factorization $\mathcal{F}^*$ of $(C_m \wr E_2)^*$. Clearly, each copy of $F^*$ in $\F^*$ can be decomposed into two copies of $D$ and together, these copies of $D$ form the desired directed 2-factorization of $(C_m \wr E_2)^*$.
\end{proof}

Lemma~\ref{L:Hag} does not apply to bipartite 2-factors containing at least one cycle of length 2. However, for our purpose, we do not need to consider this case.

Let $D$ be a bipartite directed $2$-factor on $2m$ vertices comprised of directed cycles of length at least 4. Our overall strategy for finding $D$-factorizations of $K^*_{2m}$ is first to decompose $K^*_{2m}$ into copies of $(C_m \wr E_2)^*$ and a single copy of another graph that we call $W^*_{2m}$. It will then suffice to find a $D$-factorization of $W^*_{2m}$ because we can form a $D$-factorization of $K^*_{2m}$ by taking the union of this $D$-factorization with $D$-factorizations of the copies of $(C_m \wr E_2)^*$ provided by Lemma~\ref{L:Hag}. In the remainder of this section, we define the graph $W^*_{2m}$ and show that $K^*_{2m}$ can indeed be decomposed into copies of $(C_m \wr E_2)^*$ and one copy of $W^*_{2m}$. This approach is inspired by the one used by Bryant and Danziger in \cite{BryDan}.

\begin{definition}\label{defn:W}
If $m$ is odd, we define $W_{2m}$ to be $\Circ(m,\{1,2\}) \wr K_2$ and if $m$ is even, we define $W_{2m}$ to be $\Circ(m,\{1,3^{\rm e}\}) \wr K_2$, where $\Circ(m,\{1,3^{\rm e}\})$ denotes the graph with vertex set $\mathbb{Z}_m$ and edge set
\[\{\{i,i+1\}:i \in \mathbb{Z}_m\} \cup \{\{i,i+3\}:i \in \mathbb{Z}_m \text{ is even}\}.\]
\end{definition}

See Figure \ref{fig:ex13} for an illustration of $W_{10}$.

\begin{figure} [htpb]
\begin{center}
\begin{tikzpicture}[thick,  every node/.style={circle,draw=black,fill=black!90, inner sep=1.5}, scale=1]
 \node (x0) at (0.0,1.0) [label=above:$x_0$]  [draw=gray, fill=gray]  {};
 \node (x1) at (1.0,1.0) [label=above:$x_1$]  [draw=gray, fill=gray] {};
 \node (x2) at (2.0,1.0) [label=above:$x_2$] {};
 \node (x3) at (3.0,1.0) [label=above:$x_3$]  {};
 \node (x4) at (4.0,1.0) [label=above:$x_4$] {};
 \node (x5) at (5.0,1.0) [label=above:$x_{5}$]  {};
 \node (x6) at (6.0,1.0)  [label=above:$x_6$] {};
 \node (x7) at (7,1.0) [label=above:$x_{7}$] {};
  \node (x8) at (8,1.0) [label=above:$x_{8}$] {};
  \node (x9) at (9,1.0) [label=above:$x_{9}$] {};
  \node (x10) at (10,1.0) [label=above:$x_{0}$] [draw=gray, fill=gray] {};
  \node (x11) at (11,1.0) [label=above:$x_{1}$] [draw=gray, fill=gray] {};
\node(y0) at (0,0)  [label=below:$y_0$] [draw=gray, fill=gray] {};
\node(y1) at (1,0)   [label=below:$y_1$] [draw=gray, fill=gray]  {};
\node(y2) at (2,0)   [label=below:$y_2$] {};
\node(y3) at (3,0)   [label=below:$y_3$] {};
\node(y4) at (4,0)  [label=below:$y_4$]  {};
\node(y5) at (5,0)  [label=below:$y_{5}$]  {};
\node(y6) at (6,0)  [label=below:$y_{6}$]  {};
 \node (y7) at (7,0.0)  [label=below:$y_{7} $]{};
  \node (y8) at (8,0.0)  [label=below:$y_{8} $] {};
  \node (y9) at (9,0.0) [label=below:$y_{9}$] {};
  \node (y10) at (10,0.0) [label=below:$y_{0}$] [draw=gray, fill=gray] {};
   \node (y11) at (11,0.0) [label=below:$y_{1}$] [draw=gray, fill=gray] {};

\path [very thick, draw=color1]
(x0) to (y3)
(y0) to [bend right=15] (y3)
(x0) to [bend left=15] (x3)
(y0) to (x3)
(x0) to (y1)
(x0) to (x1)
(x0) to (y0)
(y0) to (x1)
(y0) to (y1)
(x1) to (y2)
(x1) to (x2)
(x1) to (y1)
(y1) to (x2)
(y1) to (y2)
(x2) to (y5)
(y2) to [bend right=15] (y5)
(x2) to [bend left=15] (x5)
(y2) to (x5)
(x2) to (y3)
(x2) to (x3)
(x2) to (y2)
(y2) to (x3)
(y2) to (y3)
(x3) to (y4)
(x3) to (x4)
(x3) to (y3)
(y3) to (x4)
(y3) to (y4)
(x4) to (y7)
(y4) to [bend right=15] (y7)
(x4) to [bend left=15] (x7)
(y4) to (x7)
(x4) to (y5)
(x4) to (x5)
(x4) to (y4)
(y4) to (x5)
(y4) to (y5)
(x5) to (y6)
(x5) to (x6)
(x5) to (y5)
(y5) to (x6)
(y5) to (y6)
(x6) to (y9)
(y6) to [bend right=15] (y9)
(x6) to [bend left=15] (x9)
(y6) to (x9)
(x6) to (y7)
(x6) to (x7)
(x6) to (y6)
(y6) to (x7)
(y6) to (y7)
(x7) to (y8)
(x7) to (x8)
(x7) to (y7)
(y7) to (x8)
(y7) to (y8)
(x8) to (y11)
(y8) to [bend right=15] (y11)
(x8) to [bend left=15] (x11)
(y8) to (x11)
(x8) to (y9)
(x8) to (x9)
(x8) to (y8)
(y8) to (x9)
(y8) to (y9)
(x9) to (y10)
(x9) to (x10)
(x9) to (y9)
(y9) to (x10)
(y9) to (y10);
	
\end{tikzpicture}
\end{center}
\caption{The graph $W_{10}$; the digraph $W^*_{10}$ is obtained by replacing each edge with a pair of arcs oriented in opposite directions.}
\label{fig:ex13}
\end{figure}
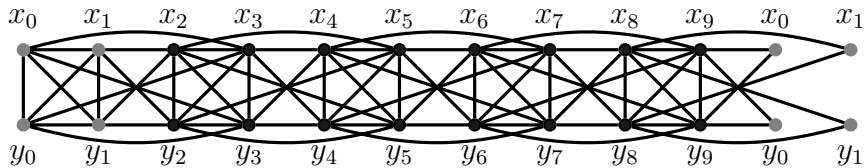

Our goal in this section is to prove the following result.

\begin{lemma}\label{L:reduction}
Let $D$ be a bipartite directed $2$-factor of order $2m \geqslant 14$ comprised of directed cycles of length at least $4$. There is a $D$-factorization of $K^*_{2m}$ if there is a $D$-factorization of $W^*_{2m}$.
\end{lemma}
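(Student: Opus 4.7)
The plan is to reduce a $D$-factorization of $K_{2m}^*$ to one of $W_{2m}^*$ by arc-decomposing $K_{2m}^*$ into one copy of $W_{2m}^*$ together with several copies of $(C_m\wr E_2)^*$. Once this decomposition is in hand, Lemma~\ref{L:Hag}---applicable because $D$ is a bipartite directed 2-factor on $2m$ vertices all of whose directed cycles have length at least $4$---supplies a $D$-factorization of each copy of $(C_m\wr E_2)^*$, and these can be united with the hypothesised $D$-factorization of $W_{2m}^*$ to yield the required $D$-factorization of $K_{2m}^*$.

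To set up the edge-decomposition, I would identify $V(K_{2m}^*)$ with $\mathbb{Z}_m\times\{0,1\}$ so that the ``fibers'' are the pairs $\{(i,0),(i,1)\}$ for $i\in\mathbb{Z}_m$. Both $K_{2m}^*$ and $W_{2m}^*$ contain exactly the pair of arcs on each fiber, so after subtracting the fiber arcs from both, the digraph $K_{2m}^*-W_{2m}^*$ equals $((K_m-R)\wr E_2)^*$, where $R=\Circ(m,\{1,2\})$ if $m$ is odd and $R=\Circ(m,\{1,3^{\rm e}\})$ if $m$ is even. Since $H\wr E_2\cong C_m\wr E_2$ whenever $H$ is a Hamilton cycle of $K_m$, it suffices to produce a Hamilton decomposition $\{H_1,\ldots,H_\ell\}$ of the undirected graph $K_m-R$: the symmetric digraphs $(H_i\wr E_2)^*$ are then the desired copies of $(C_m\wr E_2)^*$, and together with $W_{2m}^*$ they partition the arcs of $K_{2m}^*$.

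The crux of the proof---and what I expect to be the main obstacle---is this Hamilton decomposition of $K_m-R$. When $m$ is odd (so $m\geq 7$), $K_m-R=\Circ(m,\{3,4,\ldots,(m-1)/2\})$ is a connected $(m-5)$-regular circulant, and a Hamilton decomposition into $(m-5)/2$ Hamilton cycles can be read off from classical results on Hamilton-decomposability of circulants (with the base case $m=7$ trivial, because $\Circ(7,\{3\})$ is already a single Hamilton cycle). When $m$ is even (so $m\geq 8$), the graph $K_m-R$ is no longer vertex-transitive---only the ``even-start'' difference-$3$ edges have been removed---so the $(m-4)/2$ edge-disjoint Hamilton cycles must be assembled more carefully: my plan is to dispatch the small case $m=8$ by an explicit construction (the $16$ edges of $K_8-R$ split neatly into two Hamilton cycles) and then, for larger even $m$, to combine the full difference classes $\{2,4,5,\ldots,m/2\}$ with the odd-start difference-$3$ edges via a rotational or inductive scheme. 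The bound $2m\geq 14$, equivalently $m\geq 7$, is exactly what ensures that enough difference classes remain in $K_m-R$ for such a construction to succeed.
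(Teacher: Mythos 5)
Your overall reduction is exactly the paper's: identify $K_{2m}$ with $K_m \wr K_2$, peel off one copy of $W^*_{2m}$ (which absorbs the fibre arcs), write the remainder as copies of $(C_m\wr E_2)^*$, and finish with Lemma~\ref{L:Hag}. The gap lies in the ingredient you yourself flag as the crux: the decomposition of $K_m-R$ into Hamilton cycles is asserted, not established. For even $m$ the graph $K_m-\Circ(m,\{1,3^{\rm e}\})$ is, as you note, not vertex-transitive and in particular not a circulant, so no off-the-shelf circulant theorem applies; the existence of its Hamilton decomposition is precisely Lemma~7 of Bryant and Danziger (Lemma~\ref{L:BryDan} here), a genuinely nontrivial construction that you neither cite nor carry out --- an $m=8$ example plus an unspecified ``rotational or inductive scheme'' is a plan rather than a proof. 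The paper closes this step simply by invoking \cite{BryDan}.

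For odd $m$ your appeal to ``classical results on Hamilton-decomposability of circulants'' is also too strong: there is no general theorem that connected circulants of even degree decompose into Hamilton cycles (this is an open conjecture in general); what is available is the Bermond--Favaron--Mah\'eo result for connection sets of size two (Lemma~\ref{L:circFac}). The paper's Lemma~\ref{L:BryDanAnalogue} makes the odd case work by splitting the remaining differences $\{3,4,\ldots,\tfrac{m-1}{2}\}$ into two-element classes with $\gcd$ one together with $m$, with a parity adjustment when $m\equiv 3\pmod 4$ (using $\{3,5\}$ and the singleton $\{4\}$, where $\Circ(m,\{4\})$ is itself a Hamilton cycle for odd $m$, and treating $m=7$ separately). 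Your odd case is salvageable along exactly these lines, but as written both halves of the key decomposition are gestured at rather than proved, and the even half is the substantive missing piece.
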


When $m$ is even we will make use of the following result of Bryant and Danziger \cite{BryDan}.

\begin{lemma}[{\cite[Lemma 7]{BryDan}}]\label{L:BryDan}
For each even $m \geq 8$, there is a factorization of $K_m$ into $\frac{m-4}{2}$ copies of $C_m$ and a copy of $\Circ(m,\{1,3^{\rm e}\})$.
\end{lemma}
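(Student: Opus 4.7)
The plan is to decompose $K^*_{2m}$ into one copy of $W^*_{2m}$ together with several copies of $(C_m \wr E_2)^*$. Given such a decomposition, applying Lemma~\ref{L:Hag} to each $(C_m \wr E_2)^*$ yields a $D$-factorization of that copy, and taking the union with the hypothesized $D$-factorization of $W^*_{2m}$ produces the required $D$-factorization of $K^*_{2m}$. Since $K^*_{2m}$, $W^*_{2m}$, and $(C_m \wr E_2)^*$ are all obtained by doubling the edges of their undirected counterparts, it suffices to exhibit the corresponding edge-decomposition of $K_{2m}$ and orient every edge both ways. Identifying $K_{2m}$ with $K_m \wr K_2$, the within-column edges form a perfect matching $M$, giving $K_{2m} = (K_m \wr E_2) \cup M$. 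Since the operation $G \mapsto G \wr E_2$ distributes over edge-disjoint decompositions, it is then enough to decompose $K_m$ into copies of $C_m$ together with one copy of a base graph $R_m$ satisfying $R_m \wr K_2 = W_{2m}$; the copies of $C_m \wr E_2$ arising from the Hamilton cycles, together with $(R_m \wr E_2) \cup M = R_m \wr K_2 = W_{2m}$, then form the desired decomposition.

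For even $m \geq 8$, the base graph is $R_m = \Circ(m,\{1,3^{\rm e}\})$, and Lemma~\ref{L:BryDan} immediately provides the required decomposition of $K_m$ into $(m-4)/2$ copies of $C_m$ and one copy of $\Circ(m,\{1,3^{\rm e}\})$. For odd $m \geq 7$, the base graph is $R_m = \Circ(m,\{1,2\})$, and I would decompose $K_m$ into $(m-5)/2$ Hamilton cycles and one copy of $\Circ(m,\{1,2\})$ as follows. Since $\gcd(m,1) = \gcd(m,2) = 1$, the graph $\Circ(m,\{1,2\})$ is the edge-disjoint union of the two Hamilton cycles $\Circ(m,\{1\})$ and $\Circ(m,\{2\})$, so it suffices to Hamilton-decompose the complementary circulant $\Circ(m,\{3,4,\ldots,(m-1)/2\})$. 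I would partition the connection set $\{3,4,\ldots,(m-1)/2\}$ into consecutive pairs $\{3,4\},\{5,6\},\ldots$; each pair $\{a,a+1\}$ gives a connected $4$-regular circulant $\Circ(m,\{a,a+1\})$ (connected because $\gcd(m,a,a+1)=1$), which splits into two Hamilton cycles by the Bermond-Favaron-Mah\'eo theorem on $4$-regular Cayley graphs of abelian groups. When $(m-5)/2$ is odd (equivalently $m \equiv 3 \pmod 4$), one difference is left unpaired; I would take it to be $(m-1)/2$, which is coprime to $m$ (since $2 \cdot (m-1)/2 = m-1$), so $\Circ(m,\{(m-1)/2\})$ is itself a single Hamilton cycle. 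One can verify the bookkeeping case-by-case for small $m$: for $m=7$ the only remaining difference is $(m-1)/2 = 3$; for $m=9$ the pair $\{3,4\}$ handles all remaining differences, and so on.

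The main obstacle is the odd-$m$ case, which leans on a nontrivial Hamilton-decomposition theorem for $4$-regular connected circulants. The even-$m$ case falls out directly from Lemma~\ref{L:BryDan}, and all remaining steps amount to routine manipulation of wreath products, edge-decompositions, and the doubling operation $G \mapsto G^*$.
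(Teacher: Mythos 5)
Your proposal does not prove the statement it is meant to prove. The statement here is Lemma~\ref{L:BryDan} itself: the assertion that for even $m \geq 8$ the complete graph $K_m$ decomposes into $\frac{m-4}{2}$ Hamilton cycles and one copy of $\Circ(m,\{1,3^{\rm e}\})$. In the paper this is an external result, quoted from Lemma~7 of Bryant and Danziger, and no proof is given beyond the citation. What you have written is instead a proof of the reduction step (Lemma~\ref{L:reduction}), together with a proof of the odd-$m$ analogue (Lemma~\ref{L:BryDanAnalogue}); and at the crucial point for even $m$ you write that ``Lemma~\ref{L:BryDan} immediately provides the required decomposition.'' That is circular: you are invoking the very statement under consideration as a black box, and no argument for it appears anywhere in your write-up.

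Note also that the pairing-of-differences strategy you use for odd $m$ does not transfer to the even case, which is presumably why Bryant and Danziger needed a separate argument. The graph $\Circ(m,\{1,3^{\rm e}\})$ is $3$-regular and is not a circulant of the form $\Circ(m,S)$ (only the difference-$3$ edges based at even vertices are present), so its complement in $K_m$ is not a union of circulants $\Circ(m,\{a,a+1\})$ to which Lemma~\ref{L:circFac} could be applied; moreover for even $m$ the difference $m/2$ contributes a perfect matching rather than a Hamilton cycle, so the residual $(m-4)$-regular graph cannot be Hamilton-decomposed by simply pairing up the remaining differences. A genuine proof of Lemma~\ref{L:BryDan} requires a different construction, which is exactly what the cited Lemma~7 of Bryant and Danziger supplies.
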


We will require an analogue for Lemma~\ref{L:BryDan} for the case where $m$ is odd. To prove this, we state a lemma on decomposition of circulants into hamiltonian cycles. It asserts a special case of a result of Bermond, Favaron, and Mahéo \cite{BerFavMah} on 2-factorizations of Cayley graphs.

\begin{lemma}[\cite{BerFavMah}]\label{L:circFac}
Let $m$ be an integer and let $S$ be a subset of $\{1,\ldots,\lfloor\frac{m-1}{2}\rfloor\}$. Then $\Circ(m,S)$ admits a  $C_m$-factorization if $|S|=2$ and $\gcd(S \cup \{m\})=1$.


\end{lemma}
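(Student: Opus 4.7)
The goal is to decompose the $4$-regular graph $\Circ(m,S)$ into two edge-disjoint Hamilton cycles. Write $S=\{a,b\}$ with $a<b$; the hypothesis $\gcd(a,b,m)=1$ guarantees connectedness (clearly a necessary condition), and $b\leq \lfloor(m-1)/2\rfloor$ rules out $b=m/2$ and the matching case, so the graph really is $4$-regular and what we want is a Hamilton decomposition.

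My first move is to apply the multiplier automorphism $\mu_\lambda : x\mapsto \lambda x$ of $\mathbb{Z}_m$, which is a graph isomorphism whenever $\gcd(\lambda,m)=1$ and sends $\Circ(m,\{a,b\})$ to $\Circ(m,\{\lambda a,\lambda b\})$ (after replacing each label by its canonical representative in $\{1,\ldots,\lfloor(m-1)/2\rfloor\}$). If either $a$ or $b$ is coprime to $m$ --- say $a$ --- choose $\lambda=a^{-1}$ and reduce to $\Circ(m,\{1,c\})$ for some $c\in\{2,\ldots,\lfloor(m-1)/2\rfloor\}$. In this reduced setting the step-$1$ edges already form the Hamilton cycle $(0,1,2,\ldots,m-1,0)$; if in addition $\gcd(c,m)=1$ then the step-$c$ edges independently form a second Hamilton cycle and we are done.

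When $d:=\gcd(c,m)>1$, set $k=m/d$: the step-$c$ edges split into $d$ disjoint $k$-cycles, each supported on a coset of $\langle d\rangle$ in $\mathbb{Z}_m$, which I picture as $d$ ``columns''. I would build $H_1$ by snaking through the columns in order: traverse $k-1$ consecutive step-$c$ edges inside column $0$, cross to column $1$ via a single step-$1$ edge, traverse most of column $1$, cross to column $2$, and so on, finally closing back to the start. The positions of the $d$ ``cross-edges'' must be chosen so that the $m-d$ unused step-$1$ edges (which form $d$ arcs along $0,1,\ldots,m-1$) combine with the $d$ unused step-$c$ edges into a second single Hamilton cycle $H_2$; a uniform spacing of the crossings, for instance at positions $\{jk-1:0\leq j<d\}$, suffices once one checks that the resulting ``quotient permutation'' on the set of column indices is a single $d$-cycle.

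The stubborn case is when $\gcd(a,m)>1$ and $\gcd(b,m)>1$ simultaneously, yet $\gcd(a,b,m)=1$; here no multiplier reduction to $\{1,c\}$ is available and both step-$a$ and step-$b$ edges must appear in \emph{both} Hamilton cycles. This is the main obstacle and the heart of the Bermond--Favaron--Mah\'eo argument in \cite{BerFavMah}: the hypothesis $\gcd(a,b,m)=1$ implies that the common refinement of the coset partitions for $\langle a\rangle$ and $\langle b\rangle$ is the whole of $\mathbb{Z}_m$, and one interleaves runs of step-$a$ and step-$b$ edges, using a careful residue/parity transition argument to guarantee that both the chosen edge set and its complement form a single cycle. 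For this case I would invoke \cite{BerFavMah} directly rather than reproduce the full combinatorial verification.
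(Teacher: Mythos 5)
The paper does not prove this lemma at all: it is stated with the attribution ``[\cite{BerFavMah}]'' and the surrounding text explicitly describes it as ``a special case of a result of Bermond, Favaron, and Mahéo,'' so the intended justification is the citation itself, and there is no in-paper argument to compare yours against. Measured as an independent proof, your sketch has genuine gaps in exactly the places where the work lies. In the reducible case $\Circ(m,\{1,c\})$ with $d=\gcd(c,m)>1$, the two things you wave at --- that the snake closes up into a single Hamilton cycle, and that the complementary $m-d$ step-$1$ edges together with the $d$ leftover step-$c$ edges form a single cycle rather than several --- are the entire content of the proof, and your proposed ``uniform spacing at positions $\{jk-1:0\leq j<d\}$'' does not obviously work: for the $j$-th crossing to carry the snake from column $j$ to column $j+1$ you need the crossing vertex to lie in the coset $j+\langle d\rangle$, i.e.\ $jk-1\equiv j\pmod d$, a congruence that fails in general. (Also note the columns are cosets of $\langle d\rangle$, not of $\langle c\rangle$ as a set of ``step-$c$'' translates in the naive sense, so the bookkeeping needs care.) In the case where neither generator is a unit modulo $m$ you defer entirely to \cite{BerFavMah}, which is the same citation the lemma already carries.

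In short: your fallback to the reference is exactly what the paper does, so the lemma is adequately justified either way; but the partial construction you interpose in front of that citation is not yet a proof of the cases it claims to handle, and if you want to keep it you should either verify the closing and complement conditions for an explicit choice of crossing positions or drop the sketch and cite \cite{BerFavMah} for the whole statement, as the authors do. It is also worth observing that in the one place the paper actually uses this lemma (Lemma~4.6, the decomposition of $K_m$ for odd $m$), every connection set consists of two coprime integers, so at least one generator is frequently, but not always, a unit modulo $m$; the lemma is still needed in its stated generality.
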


\begin{lemma}~\label{L:BryDanAnalogue}
For each odd $m \geq 7$, the graph $K_m$ admits a decomposition into $\frac{m-5}{2}$ copies of $C_m$ and one copy of $\Circ(m,\{1,2\})$.
\end{lemma}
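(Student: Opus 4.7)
The plan is to identify $K_m$ with the circulant $\Circ(m,\{1,2,\ldots,(m-1)/2\})$, which holds because $m$ is odd, and then observe that its edge set splits as $E(\Circ(m,\{1,2\})) \cup E(\Circ(m,T))$, where $T=\{3,4,\ldots,(m-1)/2\}$. Since this already furnishes the required copy of $\Circ(m,\{1,2\})$, it suffices to decompose $\Circ(m,T)$ into exactly $|T|=(m-5)/2$ hamiltonian cycles of $K_m$.

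To build this hamiltonian decomposition, I would partition $T$ into blocks of two kinds: singletons $\{s\}$ with $\gcd(s,m)=1$, each of which is itself the hamiltonian cycle $\Circ(m,\{s\})$; and pairs $\{s_1,s_2\}$ with $\gcd(s_1,s_2,m)=1$, each of which contributes two hamiltonian cycles via Lemma~\ref{L:circFac}. With such a partition, every element of $T$ accounts for exactly one hamiltonian cycle, so the count automatically matches the required $(m-5)/2$.

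The construction of the partition is guided by two elementary facts: any pair of consecutive integers satisfies $\gcd(s,s+1,m)=1$, and $\gcd(4,m)=1$ since $m$ is odd. I would split on $m\pmod 4$. When $m\equiv 1\pmod 4$, $|T|$ is even and the consecutive pairing $\{3,4\},\{5,6\},\ldots,\{(m-3)/2,(m-1)/2\}$ works throughout. When $m\equiv 3\pmod 4$, $|T|$ is odd and a single singleton is required: if $3\nmid m$, take $\{3\}$ as that singleton and pair the remainder consecutively from $\{4,5\}$; if $3\mid m$ (which forces $m\geq 15$), take $\{4\}$ as the singleton, take $\{3,5\}$ as an ad hoc pair (admissible because $\gcd(3,5)=1$), and pair the remainder consecutively from $\{6,7\}$.

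The main obstacle I anticipate is the last sub-case, where the natural singleton $\{3\}$ fails because $3\mid m$; the substitution of $\{4\}$ together with the auxiliary pair $\{3,5\}$ is the one non-routine choice in the argument. Everything else reduces to the consecutive-pair observation and a short arithmetic check that each branch of the partition exhausts $T$ and produces the required $(m-5)/2$ hamiltonian cycles, completing the decomposition.
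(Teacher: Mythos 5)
Your proposal is correct and follows essentially the same route as the paper: identify $K_m$ with $\Circ(m,\{1,\ldots,\frac{m-1}{2}\})$, set aside $\Circ(m,\{1,2\})$, and decompose the remaining connection lengths into singletons coprime to $m$ and pairs satisfying the gcd condition of the Bermond--Favaron--Mah\'eo lemma, each pair yielding two hamiltonian cycles. The only difference is cosmetic: for $m\equiv 3\pmod 4$ you split on whether $3\mid m$ (using the singleton $\{3\}$ when $3\nmid m$), whereas the paper uniformly uses $\{3,5\}$ and $\{4\}$ for $m\geq 11$ and treats $m=7$ separately; both partitions are valid.
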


\begin{proof}
The graph $\Circ(m,\{1,\ldots,\frac{m-1}{2}\})$ is a copy of $K_m$. If $m=7$, then $\Circ(7,\{1,2, 3\})$ has a decomposition $\F= \{\Circ(7, \{1,2\}), \Circ(7, \{3\})\}$. If $m \geqslant 9$, then $\Circ(m,\{1,\ldots,\frac{m-1}{2}\})$ has a decomposition $\F$ given by
\[
\begin{array}{ll}
    \bigl\{\Circ(m,S): S \in \{\{1,2\},\{3,4\},\ldots,\{\tfrac{m-3}{2},\tfrac{m-1}{2}\}\}\bigr\} &
\hbox{if $m \equiv 1 \mod{4}$}; \\
    \bigl\{\Circ(m,S): S \in \{\{1,2\},\{3, 5\}, \{4\},\{6,7\},\{8,9\}, \ldots,\{\tfrac{m-3}{2},\tfrac{m-1}{2}\}\}\bigr\} & \hbox{if $m \equiv 3 \mod{4}$.}
  \end{array}
\]

\noindent Clearly, $\Circ(m, \{4\})$ is a copy of $C_m$ when $m$ is odd and $\Circ(7, \{3\})$ is a copy of $C_7$. Therefore, in each case it can be seen that each subgraph in $\F$ other than $\Circ(m,\{1,2\})$ admits a $C_m$-factorization by using Lemma~\ref{L:circFac}. Taking the union of these $C_m$-factorizations together with $\Circ(m,\{1,2\})$ completes the proof. \end{proof}

Using Lemmas~\ref{L:BryDan} and \ref{L:BryDanAnalogue} we can complete our proof of Lemma~\ref{L:reduction}.

\begin{proof}[\textup{\textbf{Proof of Lemma~\ref{L:reduction}}}]

By Lemmas~\ref{L:BryDan} and \ref{L:BryDanAnalogue}, there is a decomposition $\{G\} \cup \mathcal{C}$ of $K_m$ where $\mathcal{C}$ is a set of directed cycles of length $m$, $G$ is a copy of $\Circ(m,\{1,3^{\rm e}\})$ if $m$ is even, and $G$ is a copy of $\Circ(m,\{1,2\})$ if $m$ is odd. Since $K_m \wr K_2$ is isomorphic to $K_{2m}$, we have that $\F$ is a decomposition of $K_{2m}$ where
\[\mathcal{F}=\{G \wr K_2\} \cup \{C \wr E_2:C \in \C\}.\]

Noting $(G \wr K_2)^*$ is a copy of $W^*_{2m}$, we see that $\F^*=\{F^*:F \in \F\}$ is a decomposition of $K^*_{2m}$ into copies of $(C_m \wr E_2)^*$ and one copy of $W^*_{2m}$. By Lemma~\ref{L:Hag}, $(C \wr E_2)^*$ has a $D$-factorization $\D_C$ for each $C \in \C$. Thus, if $W^*_{2m}$ has a $D$-factorization $\D'$, then $\D' \cup \{\D_C:C \in \C\}$ will be a $D$-factorization of $K^*_{2m}$.
\end{proof}

In summary Lemma~\ref{L:reduction} implies that, to prove Theorem~\ref{thm:twotab}, it suffices to construct a $(\vec{C}_{t_1}, \vec{C}_{t_2})$-factorization of $W^*_{t_1+t_2}$ when $t_1 \in \{4,6\}$, $t_2$ is even, and $t_1+t_2\geqslant 14$.

\section{Main construction} \label{S:main}

Throughout this section we take $t_1$ and $q$ to be fixed integers such that $t_1 \in \{4,6\}$ and
\begin{equation}\label{E:qChoice}
q \in \left\{
  \begin{array}{ll}
    \{10,14,16,20\} & \hbox{if $t_1=4$}; \\
    \{14,16,18,20\} & \hbox{if $t_1=6$.}
  \end{array}
\right.
\end{equation}

Our goal will be to show that a $(\vec{C}_{t_1}, \vec{C}_{q+8k})$-factorization of $W^*_{t_1+q+8k}$ exists for each nonnegative integer $k$. Lemma \ref{L:reduction} will establish our main theorem for all pairs $(t_1,q)$ except those in $\{(6,8), (6, 12), (4,12), (6,10)\}$. We will then deal with these special cases in Appendix \ref{sub:spec}.

\begin{notation}
\label{not:ver} \rm
Throughout the remainder of the paper we shall assume that, in the definition of $W^*_{2m}$ given in Definition~\ref{defn:W}, the copy of $K_2$ has vertex set $\{x,y\}$. Further, we will abbreviate vertices $(a,x)$ to $x_a$ and vertices $(b,y)$ to $y_b$ so that
\begin{center}
$V(W^*_{2m})=\{x_a, y_b\, : \ a,b \in \mathds{Z}_m\}$.
\end{center}
\end{notation}

In addition, we define the following permutations of $V(W^*_{2m})$.

\begin{definition}
\label{def:rho}
For each even integer $j$, we will take $\rho^j$ to be the permutation of $V(W^*_{2m})$ defined by $\rho^j(x_i)=x_{i+j}$ and $\rho^j(y_i)=y_{i+j}$, with subscript addition performed modulo $m$. For a dipath $P=v_0v_1\cdots v_t$ of $W^*_{2m}$, we let $\rho^j(P)=\rho^j(v_0)\rho^j(v_1)\rho^j(v_2)\cdots \rho^j(v_t)$. We refer to a dipath $\rho^j(P)$ as a \textit{translation} of $P$, and note that $\rho^j(P)$ is also a dipath of $W^*_{2m}$ since $j$ is even.
\end{definition}

Each of the factors in the directed 2-factorizations we desire will be created from what we call a \emph{($t_1, q$)-base tuple} $(X, Q, R, S, T)$ where $X$ is a directed $t_1$-cycle and $Q$, $R$, $S$ and $T$ are dipaths of various lengths. We will define ($t_1, q$)-base tuples formally in Definition \ref{defn:base2.2} below, but first we give an informal overview of how they will be used. For a given nonnegative integer $k$, from each ($t_1, q$)-base tuple $(X, Q, R, S, T)$,  we will construct a $(\vec{C}_{t_1}, \vec{C}_{q+8k})$-factor which is a union of the following pieces:

\begin{itemize}
    \item
a directed $t_1$-cycle $X$; 
    \item two dipaths $I_0$ and $I_1$ formed as the concatenation of  $k$ translations of $R$ and $S$, respectively;
    \item two dipaths $Q$ and $R$ such that $s(Q)=t(I_1)$, $t(Q)=s(I_0)$, $s(R)=t(I_0)$, and $t(R)=s(I_1)$.
\end{itemize}

The union of $Q$, $R$, $I_0$, and $I_1$ will form a directed $(q+8k)$-cycle that is disjoint from $X$. A schematic picture of this construction is given in Figure \ref{fig:schema}. Since each ($t_1, q$)-base tuple gives us factors of infinitely many orders, this approach will allow us to reduce our problem to finding only eight sets of ($t_1, q$)-base tuples (one for each possible choice of $(t_1,q)$).

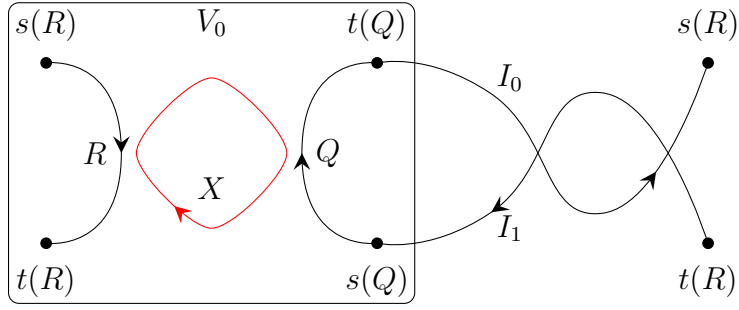
\begin{figure}[H]
\begin{center}
\begin{tikzpicture}

\node[label=below:$t(R)$] (A) at (0,-0.2) {};
\node[label=right:$R$] (R) at (0.2,1) {};
\node[label=above:$s(R)$] (B) at (0,2.2){};
\node[label=above:$X$] (X) at (2.2,0.1){};
\node[label=below:$s(Q)$] (C) at (4.4,-0.2) {};
\node[label=left:$Q$] (Q) at (4.2,1){};
\node[label=above:$t(Q)$] (D) at (4.4,2.2){};
\node[label=below:$t(R)$] (A1) at (8.8,-0.2) {};
\node[label=above:$s(R)$] (B1) at (8.8,2.2){};
\node[label=right:$I_1$] (Q) at (5.7,0){};
\node[label=right:$I_0$] (Q) at (5.7,2){};
\node[label=below:$V_0$] (V0) at (2.2,3.2){};

\draw[rounded corners] (-0.5, -1) rectangle (4.9, 3) {};

\draw plot [smooth, tension=2, very thick] coordinates {(B) (1,1)   (A)}
[arrow inside={end=stealth,opt={scale=2}}{0.5}];

\draw [red] plot [smooth cycle] coordinates {(1.2,1) (2.2,2) (3.2,1) (2.2,0)}
[arrow inside={end=stealth,opt={red,scale=2}}{0.6}];

\draw plot [smooth, tension=2] coordinates {(C) (3.4,1)   (D)}
[arrow inside={end=stealth,opt={scale=2}}{0.5}];

\draw plot [smooth, tension=1] coordinates {(D) (5.9,1.8)  (7.4, 0.2)  (B1)}
[arrow inside={end=stealth,opt={scale=2}}{0.75}];

\draw plot [smooth, tension=1] coordinates {(A1) (7.4,1.8)  (5.9, 0.2)  (C)}
[arrow inside={end=stealth,opt={scale=2}}{0.75}];

\draw [fill=black] (A) circle (2pt);
\draw [fill=black] (B) circle (2pt);
\draw [fill=black] (C) circle (2pt);
\draw [fill=black] (D) circle (2pt);
\draw [fill=black] (A1) circle (2pt);
\draw [fill=black] (B1) circle (2pt);
\end{tikzpicture}
\end{center}
\caption{ A schema of the construction of a directed 2-factor of $W^*_{2m}$ from a ($t_1, q$)-base tuple $(X, Q, R, S, T)$ with directed $t_1$-cycle $X$ drawn in red.}
\label{fig:schema}
\end{figure}

\begin{notation} \rm
\label{not:sets}
For the remainder of this section, it will be useful to set $p=\frac{1}{2}(t_1+q)$, so that $2p$ is the smallest of the orders of the directed 2-factorizations we desire. For each non-negative integer $k$, we also define certain subsets of the vertex set of $W^*_{p+4k}$ as follows:
\begin{align*}
  V_0 &= \{x_i, y_i\ : \ 0\leqslant i \leqslant p+1\}; \\
  V^{\dag}_0 &=\{x_i, y_i\ : \ 2\leqslant i \leqslant p-1\}; \\
  V_j &=\{x_{i},y_{i}\ : \ p+4j-4 \leqslant i \leqslant p+4j+1\} \text{ for each $j \in \{1, 2, \ldots, k\}$}.
\end{align*}
\end{notation}

Observe that $V_0=V(W^*_{p+4k})$ if $k=0$, $|V_0|=2(p+2)$ otherwise, and $|V_j|=12$ for each $j \in \{1,2, \ldots, k\}$. Also, for all integers $i$ and $j$ with $0 \leq i < j \leq k$,
\begin{equation}\label{E:intersections}
V_i \cap V_j=
\left\{
  \begin{array}{ll}
    \{x_{{p-1}}, x_{{p}}, y_{{p-1}}, y_{{p}}\} & \hbox{if $(i,j)=(0,1)$;} \\
    \{x_0, x_1, y_0, y_1\} & \hbox{if $(i,j)=(0,k)$;} \\
    \{x_{p+4i}, x_{p+4i+1}, y_{p+4i}, y_{p+4i+1}\} & \hbox{if $j=i+1$ and $i \geq 1$;} \\
    \emptyset & \hbox{otherwise.}
  \end{array}
\right.
\end{equation}

Let $k \geq 2$ be an integer and let $A=(P^1,\ldots,P^k)$ be a sequence of dipaths. We say that $A$ \emph{concatenates} if $t(P^i)=s(P^{i+1})$ for each $i \in \{1,\ldots,k-1\}$ and, aside from this, no vertex is in more than one dipath in the sequence. In this case we call the dipath $P^1\cup\cdots\cup P^k$ the \emph{concatenation} of $A$.  Similarly we say that $A$ \emph{cyclically concatenates} if $t(P^k)=s(P^1)$, $t(P^i)=s(P^{i+1})$ for each $i \in \{1,\ldots,k-1\}$ and, aside from this, no vertex is in more than one dipath in the sequence. In this case we call the directed cycle $P^1\cup\cdots\cup P^k$ the \emph{cyclic concatenation} of $A$.

\begin{definition} \rm
\label{defn:base2.2}
The 5-tuple $(X, Q, R, S, T)$ is a  \textit{$(t_1,q)$-base tuple} if $X$ is a directed $t_1$-cycle of $W^*_{t_1+q+24}$ and $Q$, $R$, $S$ and $T$ are dipaths of $W^*_{t_1+q+24}$ with the following properties.
\begin{enumerate} [label=\textbf{B\arabic*}]
\item $V(X) \subseteq V^{\dag}_0$; $V(Q) \subseteq V_0 \setminus \{x_0,y_0,x_1,y_1\}$; $V(R) \subseteq V_0 \setminus \{x_p,y_p,x_{p+1},y_{p+1}\}$; $V(S), V(T) \subseteq V_1$;
\item  $\len(Q)+\len(R)=q$ and $\len(S)+\len(T)=8$;
\item $X$, $Q$, and $R$ are pairwise vertex-disjoint;
\item  $(Q,\rho^p(R))$ cyclically concatenates;
\item $(T,Q,S)$ and $(\rho^{-p-4}(S),R,\rho^{-p-4}(T))$ concatenate;
\item $(S,\rho^4(S))$ and $(T,\rho^4(T))$ concatenate. Further, the concatenations of $(S,\rho^4(S))$ and $(T,\rho^4(T))$ are vertex-disjoint.
\end{enumerate}
\end{definition}

In Definition~\ref{defn:base2.2}, the choice of $t_1+q+24$ to be the order of the host graph is somewhat arbitrary. We could equivalently have chosen any other order large enough to ensure that the translations mentioned in B4, B5, and B6 do not contain vertices of $V^{\dag}_0$. Our next lemma describes how we can use a ($t_1, q$)-base tuple to obtain a directed $2$-factor.

\begin{lemma}
\label{lem:red.2}
Let  $(X, Q, R, S, T)$ be a $(t_1,q)$-base tuple and let $k$ be a nonnegative integer. In the host graph $W^*_{t_1+q+8k}$, denote $S^j=\rho^{4(j-1)}(S)$ and $T^j=\rho^{4(j-1)}(T)$ for each $j \in \{1,2,\ldots, k\}$ and let
\[A=(Q,S^1,S^2,\ldots, S^{k},R, T^{k},T^{k-1},\ldots, T^1).\]
Then $A$ cyclically concatenates and, furthermore, $X$ and the cyclic concatenation of $A$ form the cycles of a $(\vec{C}_{t_1}, \vec{C}_{q+8k})$-factor of $W^*_{t_1+q+8k}$.
\end{lemma}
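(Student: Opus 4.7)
The proof is a mostly mechanical verification that, with all indices read modulo $m := p + 4k$, conditions B1--B6 translate into exactly what is needed to show (i) that $A$ cyclically concatenates, (ii) that its cyclic concatenation is vertex-disjoint from $X$, and (iii) that together they span $V(W^*_{2m})$.

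For the endpoint matching in (i), I would first note the identity $-p-4 \equiv 4(k-1) \pmod{m}$, which follows from $m = p + 4k$ and lets us identify $\rho^{-p-4}$ with $\rho^{4(k-1)}$ on $V(W^*_{2m})$. Consequently $\rho^{-p-4}(S) = S^k$ and $\rho^{-p-4}(T) = T^k$, so B5 immediately delivers the four ``outer'' junction equalities $t(Q) = s(S^1)$, $t(S^k) = s(R)$, $t(R) = s(T^k)$, and $t(T^1) = s(Q)$, the last being the cyclic closure. The internal junctions $t(S^j) = s(S^{j+1})$ (for $1 \leq j \leq k-1$) and $t(T^j) = s(T^{j-1})$ (for $2 \leq j \leq k$) then follow by applying the appropriate power of $\rho$ (specifically $\rho^{4(j-1)}$) to the concatenation equalities provided by B6 and using that $\rho$ commutes with the construction of the translates.

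For the disjointness in (ii), I would use B1 together with the derived containments $V(S^j), V(T^j) \subseteq V_j$. By the intersection formula \eqref{E:intersections}, only neighbouring or endpoint-related index pairs can share vertices, so for $|i - j| \geq 2$ the sets $V(S^i) \cup V(T^i)$ and $V(S^j) \cup V(T^j)$ are already disjoint. Disjointness for neighbouring indices within the $S$-chain (resp.\ within the $T$-chain) aside from the specified junction vertex follows from the concatenation clause of B6 after translation. Disjointness of an $S^i$ from a $T^j$ when $|i - j| \leq 1$ is supplied by the second clause of B6 (vertex-disjointness of the two concatenated paths) after translation. Interactions between the $V_0$-supported pieces $X$, $Q$, $R$ and the endpoints of the chains are controlled by the vertex exclusions built into B1: the removal of $\{x_0, y_0, x_1, y_1\}$ from $Q$ and of $\{x_p, y_p, x_{p+1}, y_{p+1}\}$ from $R$ is exactly what eliminates all would-be coincidences with the chain endpoints lying in $V_0 \cap V_1$ and $V_0 \cap V_k$. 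Finally, $X \subseteq V^{\dag}_0$ is disjoint from $V_1 \cup \cdots \cup V_k$ because $V^{\dag}_0 \cap V_j = \emptyset$ for $j \geq 1$, and by B3 it is also disjoint from $Q$ and $R$.

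For (iii), B2 gives $\len(Q) + \len(R) + k(\len(S) + \len(T)) = q + 8k$, so the cyclic concatenation of $A$ is a directed $(q + 8k)$-cycle. Together with the directed $t_1$-cycle $X$, this accounts for $t_1 + q + 8k = 2m$ distinct vertices, which equals $|V(W^*_{2m})|$, so the two cycles form a spanning $(\vec{C}_{t_1}, \vec{C}_{q + 8k})$-factor. The main obstacle is really just the bookkeeping in (ii), especially at the two transitions $V_0 \leftrightarrow V_1$ and $V_0 \leftrightarrow V_k$, where the vertex exclusions of B1 must be used carefully to rule out every stray overlap while still preserving the intended shared endpoints.
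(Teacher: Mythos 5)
Your argument for $k \geq 1$ is essentially the paper's: the congruence $-p-4 \equiv 4(k-1) \pmod{p+4k}$ identifies $\rho^{-p-4}(S)$ with $S^k$ and $\rho^{-p-4}(T)$ with $T^k$, B5 then gives the four junctions involving $Q$ and $R$, B6 (read in the direction the examples satisfy, i.e.\ the $S$-chain ascends and the $T$-chain descends) gives the internal junctions and the $S$-versus-$T$ disjointness, and B1 together with \eqref{E:intersections} kills all remaining overlaps; the paper merely splits this into the cases $k=1$ and $k\geq 2$ rather than treating them uniformly, and your unified version is fine (for $k=1$ your congruence makes $\rho^{-p-4}$ the identity, which is exactly the paper's Case 2).

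The genuine gap is the case $k=0$, which the lemma explicitly includes and which you never address: there $A=(Q,R)$, the objects $S^1,\ldots,S^k,T^1,\ldots,T^k$ appearing throughout your endpoint-matching and disjointness arguments do not exist, and condition B4 --- which you never invoke --- is what the statement actually needs. Concretely, in $W^*_{2p}$ one must show $t(Q)=s(R)$, $t(R)=s(Q)$, \emph{and} that $Q$ and $R$ share no other vertex. The last point does not follow from B3: B3 is a disjointness statement in $W^*_{t_1+q+24}$, where $Q$ may use vertices of index $p$ or $p+1$ and $R$ vertices of index $0$ or $1$, and in the order-$2p$ host these indices are identified, so new coincidences beyond the two junctions are a priori possible. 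It also cannot be extracted from B5/B6 (those control $Q$ and $R$ only against $S$, $T$ and their translates, not against each other at the wrapped indices). This is precisely what B4 provides: since $\rho^p$ is the identity in $W^*_{2p}$, ``$(Q,\rho^p(R))$ cyclically concatenates'' is literally the assertion that $(Q,R)$ cyclically concatenates there, which is how the paper disposes of $k=0$ in one line. You should add this case (and, while doing so, note that your phrase ``$\rho^{-p-4}(S)=S^k$'' is vacuous when $k=0$, so the unified congruence argument genuinely does not cover it).
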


\begin{proof}
If $A$ cyclically concatenates, then its cyclic concatenation is a directed cycle of length $q+8k$ by B2. So it suffices to prove that $A$ does indeed cyclically concatenate and that its cyclic concatenation is vertex-disjoint from $X$.

\noindent \textbf{Case 1.} Suppose that $k=0$. Then our host graph is $W^*_{t_1+q}=W^*_{2p}$ and $A=(Q,R)$. In $W^*_{2p}$, we have that $\rho^p$ is the identity permutation and so B4 implies that $A$ cyclically concatenates. Further, B3 implies that the cyclic concatenation of $A$ is vertex-disjoint from $X$.

\noindent \textbf{Case 2.} Suppose that $k=1$. Then our host graph is $W^*_{t_1+q+8}=W^*_{2(p+4)}$ and $A=(Q,S^1,R,T^1)$. In $W^*_{2(p+4)}$, we have that $\rho^{-p-4}$ is the identity permutation and so B5 implies that $(T^1,Q,S^1)$ and $(S^1,R,T^1)$ concatenate. So, because $Q$ and $R$ are vertex-disjoint by B3, we have that $A$ cyclically concatenates. By B3 $Q$ and $R$ are both vertex-disjoint from $X$. By B1 and \eqref{E:intersections}, both $S$ and $T$ are vertex-disjoint from $X$. So the cyclic concatenation of $A$ is vertex-disjoint from $X$.

\noindent \textbf{Case 3.} Suppose that $k \geq 2$. Our host graph is $W^*_{t_1+q+8k}=W^*_{2(p+4k)}$. In $W^*_{2(p+4k)}$, we have that $\rho^{4(k-1)}=\rho^{-p-4}$ and so B5 implies that $(T^1,Q,S^1)$ and $(S^k,R,T^k)$ concatenate. By B6, we have that $(S^i,S^{i+1})$ and $(T^i, T^{i+1})$ both concatenate for each $i \in \{1,\ldots,k-1\}$. Also by B6, for each $i \in \{1,\ldots,k-1\}$, we have that $S^i$ is vertex-disjoint from $T^i$ and $T^{i+1}$ and that $T^i$ is vertex-disjoint from $S^i$ and $S^{i+1}$. Note that B1 implies that $V(S^i)$ and $V(T^i)$ are subsets of $V_i$ for each $i \in \{1,\ldots,k\}$. Thus we can conclude that $A$ cyclically concatenates because B1 and \eqref{E:intersections} imply that all the remaining vertex-disjointness conditions are met. Further, using B3 together with B1 and \eqref{E:intersections}, we have that the cyclic concatenation of $A$ is vertex-disjoint from $X$. \end{proof}


For digraphs $G$ and $H$, we use the notation $G \cong H$ to indicate that $G$ and $H$ are isomorphic. Furthermore, for digraphs $G_1$, $G_2$, $H_1$ and $H_2$, we write $(G_1,G_2) \cong (H_1,H_2)$ to indicate that $G_1 \cong H_1$, $G_2 \cong H_2$, and $G_1 \cup G_2 \cong H_1 \cup H_2$. Note that if $(G_1,G_2) \cong (H_1,H_2)$ and $H_1$ is arc-disjoint from $H_2$, then we must have that $G_1$ is arc-disjoint from $G_2$ because
\[|A(G_1 \cup G_2)| = |A(H_1 \cup H_2)| = |A(H_1)| + |A(H_2)| = |A(G_1)| + |A(G_2)|.\]

The next lemma gives conditions under which the $2$-factors arising from a number of ($t_1, q$)-base tuples form a directed 2-factorization.

\begin{lemma}
\label{lem:red2}
Let $r=9$ if $t_1+q\equiv 2 \mod{4}$, and let $r=7$ if $t_1+q \equiv 0 \mod{4}$. For each $a \in \{0,\ldots,r-1\}$  let $(X_a, Q_a, R_a, S_a, T_a)$ be a $(t_1,q)$-base tuple and let $\hat{F}_a$ be the corresponding $(\vec{C}_{t_1}, \vec{C}_{q+16})$-factor of $W^*_{t_1+q+16}$. If the digraphs in $\hat{\F}=\hat{F}_0,\ldots,\hat{F}_{r-1}$ are pairwise arc-disjoint, then $W^*_{t_1+q+8k}$ admits a $(\vec{C}_{t_1}, \vec{C}_{q+8k})$-factorization for each positive integer $k$. If, in addition, each of the dipaths $Q_0,\ldots,Q_{r-1}$ is arc-disjoint from each of the dipaths $\rho^p(R_0),\ldots,\rho^p (R_{r-1})$, then $W^*_{t_1 +q}$ admits a $(\vec{C}_{t_1}, \vec{C}_{q})$-factorization.
\end{lemma}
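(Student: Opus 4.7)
The plan is to build the factorization by applying Lemma~\ref{lem:red.2} to each base tuple and verifying pairwise arc-disjointness of the resulting factors by pulling every potential shared arc back to a shared arc of $\hat{\mathcal{F}}$. Set $F_a^{(k)}$ to be the $(\vec{C}_{t_1},\vec{C}_{q+8k})$-factor of $W^*_{t_1+q+8k}$ produced by Lemma~\ref{lem:red.2} from $(X_a,Q_a,R_a,S_a,T_a)$. A direct computation using $|E(G\wr K_2)|=4|E(G)|+|V(G)|$ and Definition~\ref{defn:W} gives $|A(W^*_{2m})|=18m$ for $m$ odd and $|A(W^*_{2m})|=14m$ for $m$ even, so $|A(W^*_{t_1+q+8k})|=r(t_1+q+8k)$. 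Hence, once pairwise arc-disjointness is established, $\{F_a^{(k)}:0\leq a<r\}$ automatically forms the required factorization.

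For $k\geq 1$, arc-disjointness is verified by a case analysis on the support of a putative shared arc. The arcs of $F_a^{(k)}$ split into those of $X_a$, $Q_a$, and $R_a$, which are supported inside $V_0$, and those of $S_a^j$ and $T_a^j$, which are supported inside $V_j$, for $j\in\{1,\dots,k\}$. The key observation is that translation by a multiple of $4$ is an automorphism of $W^*_{t_1+q+8k}$ (it preserves both circulant structures in Definition~\ref{defn:W}), so the map $\rho^{4(j-1)}$ carries the induced subdigraph on $V_1\cup V_2$ in $W^*_{t_1+q+16}$ isomorphically onto the induced subdigraph on $V_j\cup V_{j+1}$ in $W^*_{t_1+q+8k}$. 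A shared arc inside $V_j$ or $V_j\cap V_{j+1}$ with $j\geq 1$ therefore pulls back under $\rho^{-4(j-1)}$ to a shared arc of $\hat{F}_a$ and $\hat{F}_b$ inside $V_1\cup V_2$. A shared arc inside $V_0$ or $V_0\cap V_1$ is unchanged by the modulus change and is already a shared arc of $\hat{F}_a$ and $\hat{F}_b$. Finally, a shared arc inside $V_0\cap V_k=\{x_0,x_1,y_0,y_1\}$ (possible only for $k\geq 2$) arises from arcs of $S_a^k\cup T_a^k$ whose wrap-around identifies master-graph endpoints $\{x_{p+4},x_{p+5},y_{p+4},y_{p+5}\}$ with $\{x_0,x_1,y_0,y_1\}$; the very same underlying arcs of $S_a\cup T_a$ yield, through the analogous wrap-around of $V_2$ inside $W^*_{t_1+q+16}$, the corresponding arcs of $S_a^2\cup T_a^2$ in $V_0\cap V_2=\{x_0,x_1,y_0,y_1\}$ of $\hat{F}_a$, replicating the collision inside $\hat{F}_a\cap\hat{F}_b$. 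Each case contradicts the arc-disjointness of $\hat{\mathcal{F}}$.

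The $k=0$ case is where I expect the principal subtlety. In $W^*_{2p}$ the permutation $\rho^p$ is the identity and $F_a^{(0)}=X_a\cup Q_a\cup R_a$. The collision checks for the pairs $(X_a,X_b)$, $(X_a,Q_b)$, $(X_a,R_b)$, $(Q_a,Q_b)$, and $(R_a,R_b)$ all reduce to the $\hat{\mathcal{F}}$ hypothesis: the vertex-support constraints from~B1, together with the short connection set of the circulant ($\{1,2\}$ or $\{1,3^{\rm e}\}$), ensure that any $W^*_{2p}$-collision in these pairs is already present between the corresponding arcs of $\hat{F}_a$ and $\hat{F}_b$ in $W^*_{t_1+q+16}$. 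The one genuinely new kind of collision arises between a wrapped arc of $Q_a$ (using master-graph indices in $\{x_p,x_{p+1}\}$, which collapse to $\{x_0,x_1\}$ modulo $p$) and an arc of $R_b$ already using $\{x_0,x_1\}$. Shifting $R_b$ by $\rho^p$ inside $W^*_{t_1+q+16}$ sends its $\{0,1\}$-endpoints back to $\{p,p+1\}$, reproducing exactly the arc of $Q_a$, so this family of collisions is precisely what the additional hypothesis excludes. Using $p\geq 7$ (which holds throughout the relevant range) together with the fact that circulant differences are at most $3$, one checks that no other wrap-induced collision can arise. The main obstacle is therefore this $k=0$ bookkeeping: cataloguing the wrap-affected arcs of $Q_a$ and matching them one-to-one with the arcs of $Q_a\cap\rho^p(R_b)$ in the base host graph.
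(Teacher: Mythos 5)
Your proposal is correct and takes essentially the same route as the paper: count arcs, then reduce pairwise arc-disjointness of the factors of $W^*_{t_1+q+8k}$ to that of $\hat{F}_0,\ldots,\hat{F}_{r-1}$ via B1, the intersection pattern of the sets $V_i$, and translations by multiples of $4$, with the additional hypothesis absorbing exactly the wrap-induced collisions between the $Q_a$ and the $\rho^p(R_b)$ when $k=0$. One bookkeeping remark: the wrap-around onto $\{x_0,x_1,y_0,y_1\}$ already occurs for $k=1$ (the block $V_1$ itself wraps in $W^*_{t_1+q+8}$), so the affected arcs of $S^1_a\cup T^1_a$ are not label-identical to arcs of $\hat{S}^1_a\cup\hat{T}^1_a$ as your second case asserts; they are precisely the arcs your third case matches with $\hat{S}^2_a\cup\hat{T}^2_a$, which is also how the paper handles its $k=1$ case.
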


\begin{proof}
Fix a nonnegative integer $k$ and suppose that the digraphs in $\hat{\F}=\hat{F}_0,\ldots,\hat{F}_{r-1}$ are pairwise arc-disjoint and that, if $k=0$, each of the paths $Q_0,\ldots,Q_{r-1}$ is arc-disjoint from each of the paths $\rho^p(R_0),\ldots,\rho^p (R_{r-1})$. For each $a \in \{0,\ldots, r-1\}$, let $F_a$ be the  $(\vec{C}_{t_1}, \vec{C}_{q+8k})$-factor of $W^*_{t_1+q+8k}$ constructed using the $(t_1,q)$-base tuple $(X_a, Q_a, R_a, S_a, T_a)$. Using the notation of Lemma~\ref{lem:red.2} relative to the host graph $W^*_{t_1+q+8k}$,  $\hat{F}_a$ is the union of $X_a$ and the cyclic concatenation of
\[(Q_a,S_a^1,S_a^2,S_a^2,\ldots, S_a^{k},R_a,T_a^{k},T_a^{k-1},\ldots, T_a^1)\]

Again using the notation of Lemma~\ref{lem:red.2}, but adding hats to indicate that the notation is relative to the host graph $W^*_{t_1+q+16}$, we let $\hat{F}_a$ be the union of $\hat{X}_a$ and the cyclic concatenation of

\[(\hat{Q}_a,\hat{S}_a^1,\hat{S}_a^2,\hat{R}_a,\hat{T}_a^{2},\hat{T}_a^1)\]

We must show that $\mathcal{F}=\{F_0, F_1, \ldots, F_{r-1}\}$ is a $(\vec{C}_{t_1}, \vec{C}_{q+8k})$-factorization of  $W^*_{t_1+q+8k}$. Observe that $\sum_{a=0}^{r-1}|A(F_a)|=|A(W^*_{t_1+q+8k})|$. Therefore, it suffices to show that the directed 2-factors in $\mathcal{F}=\{F_0, F_1, \ldots, F_{r-1}\}$ are pairwise arc-disjoint. This follows immediately from the hypothesis if $k=2$, so we can assume otherwise.

If $k=1$ then, for each $a \in \{0,\ldots,r-1\}$, $F_a$ is obtained from $\hat{F}_a$ by associating the vertices $x_{p+4},y_{p+4},\ldots,x_{p+7},y_{p+7}$ with, respectively, the vertices $x_{p},y_{p},\ldots,x_{p+3},y_{p+3}$. In this association process, by their definitions, the subdigraphs $\hat{S}_a^1$ and $\hat{S}_a^2$ of $\hat{F}_a$ both map to the subdigraph $S_a^1$ of $F_a$, and the subdigraphs $\hat{T}_a^1$ and $\hat{T}_a^2$ of $\hat{F}_a$ both map to the subdigraph $T_a^1$ of $F_a$. Together with B1. This ensures that no two arcs in different factors of $\hat{\F}$ are mapped onto the same arc. Thus, because the factors in $\hat{\F}$ are pairwise arc-disjoint, the factors in $\mathcal{F}$ are pairwise arc-disjoint.

If $k=0$ then, for each $a \in \{0,\ldots,r-1\}$, $F_a$ is obtained from $\hat{F}_a$ by deleting the arcs in $S_a^1 \cup S_a^2 \cup T_a^1 \cup T_a^2$, deleting the vertices $x_{p+2},y_{p+2},\ldots,x_{p+7},y_{p+7}$, and associating the vertices $x_{p},y_{p},x_{p+1},y_{p+1}$ with, respectively, the vertices $x_{0},y_{0},x_{1},y_{1}$. Our additional assumption in the case $k=0$, together with B1, ensures that no two arcs in different factors of $\hat{\F}$ are mapped onto the same arc in this association process. Thus, because the factors in $\hat{\F}$ are pairwise arc-disjoint, the factors in $\mathcal{F}$ are pairwise arc-disjoint.



Lastly, we consider the case $k \geq 3$. For each $a \in \{0,\ldots,r-1\}$, we let $Y_a=X_a \cup Q_a \cup R_a$ and, for each $i \in \{1,\ldots,k\}$, $U^i_a=S^i_a \cup T^i_a$. Also, we let $\hat{Y}=\hat{X}_a \cup \hat{Q}_a \cup \hat{R}_a$ and, for $j \in \{1,2\}$, we let $\hat{U}^j_a=\hat{S}^j_a \cup \hat{T}^j_a$. Let $\{V_0,\ldots,V_k\}$ be the partition of $V(W^*_{t_1+q+8k})$ defined in Notation~\ref{not:sets}. By B1 and \eqref{E:intersections} we have, for each $a \in \{0,\ldots,r-1\}$:

\begin{itemize}
    \item[(i)]
$V(Y_a) \subseteq V_0$; and
    \item[(ii)]
$V(U^i_a) \subseteq V_i$ for each $i \in \{1,\ldots,k\}$.
\end{itemize}

Let $g$ and $\ell$ be distinct elements of $\{0,\ldots,r-1\}$. We complete the proof by showing that $F_\ell$ is arc-disjoint from $F_g$. We do this by first showing that $Y_\ell$ is arc-disjoint from $F_g$. Then, for each $j \in \{1,\ldots,k\}$, we show that $U^j_\ell$ is arc-disjoint from $F_g$.



\noindent {\bf Case 1: $\bm{Y_\ell}$}. Using (i) and (ii) we have that $Y_\ell$ is vertex-disjoint from $\bigcup_{i=2}^{k-1}U^i_g$ because $V_0$ is vertex-disjoint from $V_2,\ldots,V_{k-1}$. Now $Y_\ell \cup U^1_\ell$ is arc-disjoint from $Y_g \cup U^1_g$ because $(Y_\ell \cup U^1_\ell,Y_g \cup U^1_g) \cong (\hat{Y}_\ell \cup \hat{U}^1_\ell,\hat{Y}_g \cup \hat{U}^1_g)$ and $\hat{Y}_\ell \cup \hat{U}^1_\ell$ is arc-disjoint from $\hat{Y}_g \cup \hat{U}^1_g$.  Similarly, $U^k_\ell \cup Y_\ell$ is arc-disjoint from $U^k_g \cup Y_g$ because $(U^k_\ell \cup Y_\ell,U^k_g \cup Y_g) \cong (\hat{U}^2_\ell \cup \hat{Y}_\ell,\hat{U}^2_g \cup \hat{Y}_g)$ and $\hat{U}^2_\ell \cup \hat{Y}_\ell$ is arc-disjoint from $\hat{U}^2_g \cup \hat{Y}_g$. So, in particular, $Y_\ell$ is arc-disjoint from $U^k_g \cup Y_g \cup U^1_g$ and hence from $F_g$.  \smallskip



\noindent {\bf Case 2: $\bm{U^j_\ell}$ where $\bm{j \in \{2,\ldots,k-1\}}$.} Let $\mathds{I}=\{0,1,\ldots, j-2, j+2, j+3, \ldots, k\}$. From (i) and (ii), the digraph $U^j_\ell$ is vertex-disjoint from $Y_g \cup \bigcup_{i \in \mathds{I} \setminus \{0\}}U^i_g$ because $V_j$ is vertex-disjoint from $\bigcup_{i \in \mathds{I}}V_i$. Now, $U^{j-1}_\ell \cup U^{j}_\ell$ is arc-disjoint from $U^{j-1}_g \cup U^{j}_g$ because
\[(U^{j-1}_\ell \cup U^{j}_\ell,U^{j-1}_g \cup U^{j}_g) \cong (U^{1}_\ell \cup U^{2}_\ell,U^{1}_g \cup U^{2}_g) \cong (\hat{U}^{1}_\ell \cup \hat{U}^{2}_\ell,\hat{U}^{1}_g \cup \hat{U}^{2}_g)\]
and $\hat{U}^{1}_\ell \cup \hat{U}^{2}_\ell$ is arc-disjoint from $\hat{U}^{1}_g \cup \hat{U}^{2}_g$. Similarly, $U^{j}_\ell \cup U^{j+1}_\ell$ is arc-disjoint from $U^{j}_g \cup U^{j+1}_g$ because
$(U^{j}_\ell \cup U^{j+1}_\ell,U^{j}_g \cup U^{j+1}_g) \cong (\hat{U}^{1}_\ell \cup \hat{U}^{2}_\ell,\hat{U}^{1}_g \cup \hat{U}^{2}_g)$. So, in particular, $U_j$ is arc-disjoint from $U^{j-1}_g \cup U^{j}_g \cup U^{j+1}_g$ and hence from $F_g$.\smallskip

\noindent {\bf Case 3: $\bm{U^1_\ell}$ and \bm{$U^k_\ell$}}. From (i) and (ii), it follows that $U^1_\ell$ is vertex-disjoint from $\bigcup_{i=3}^kU^i_g$ because $V_1$ is vertex-disjoint from $V_3,\ldots,V_{k}$. Likewise, $U^k_\ell$ is arc-disjoint from $\bigcup_{i=1}^{k-2}U^i_g$ because $V_k$ is vertex-disjoint from $V_1,\ldots,V_{k-2}$. In Case 1, we saw that $U^1_\ell$ and $U^k_\ell$ are both arc-disjoint from $Y_g$. In Case 2, (with $j=2$) we saw that $U^1_\ell$ is arc-disjoint from $U^2_g$. In Case 2, (with $j=k-1$) we also saw that $U^k_\ell$ is arc-disjoint from $U^{k-1}_g$. So both $U^1_\ell$ and $U^k_\ell$ are arc-disjoint from $F_g$.\smallskip

In summary, we have demonstrated that $F_\ell$ is arc-disjoint from $F_g$ for distinct $\ell$ and $g$. Therefore, the given set of $r$ ($t_1, q$)-base tuples gives rise to the desired directed 2-factorization of $W^*_{2m}$.
\end{proof}




We now conclude this section with the proof of this paper's main result, namely the proof of Theorem \ref{thm:twotab}.


\begin{proof}[\textup{\textbf{Proof of Theorem~\ref{thm:twotab}}}]

We show that $K^*_{2m}$ admits a $(\vec{C}_{t_1}, \vec{C}_{t_2})$-factorization when $t_1+t_2=2m$, $t_1 \in \{4,6\}$, and $t_1+t_2 \geq 14$. Lemma \ref{L:reduction} implies that it suffices to find a $(\vec{C}_{t_1}, \vec{C}_{t_2})$-factorization of $W^*_{2m}$. For the special cases where $(t_1, t_2) \in \{(4,12),  (6,8), (6,10), (6,12)\}$ we give a $(\vec{C}_{t_1}, \vec{C}_{t_2})$-factorization of $W^*_{2m}$ in Appendix \ref{sub:spec}. Otherwise, we have that $t_2=q+8k$ for some $q$ satisfying \eqref{E:qChoice} and nonnegative integer $k$. Let $r=9$ if $m$ is odd and $r=7$ if $m$ is even. To construct a  $(\vec{C}_{t_1}, \vec{C}_{t_2})$-factorization of $W^*_{2m}$, it suffices to construct $r$ ($t_1, q$)-base tuples satisfying the hypothesis of Lemma~\ref{lem:red2}.

\begin{enumerate}
\item If $m$ is odd, then $(t_1, q) \in \{(4,10), (4, 14), (6, 16), (6,20)\}$. Appendix \ref{sub:case1} gives a set of nine ($t_1, q$)-base tuples satisfying the hypothesis of Lemma \ref{lem:red2} for each of these choices of ($t_1, q$).
\item If $m$ is even, then $(t_1, q) \in \{(4,16), (4, 20), (6, 14), (6,18)\}$. Appendix \ref{sub:case2} gives a set of seven ($t_1, q$)-base tuples satisfying the hypothesis of Lemma \ref{lem:red2} for each of these choices of ($t_1, q$).
\end{enumerate}

In conclusion, the digraph $W^*_{2m}$ admits a $(\vec{C}_{t_1}, \vec{C}_{t_2})$-factorization when $t_1+t_2=2m$, $t_1 \in \{4,6\}$, and $t_1+t_2 \geq 14$. It follows that the OP$^*(t_1, t_2)$ has a solution for all applicable $t_1$ and $t_2$ values.
\end{proof}

The $(t_1,q)$-tuples presented in Appendices \ref{sub:case1} and \ref{sub:case2}  were constructed by hand with the assistance of a computer. For example, in many cases, we first used a computer to obtain an exhaustive list of all possible sets of dipaths $\{S_0,T_0,\ldots,S_{r-1},T_{r-1}\}$ and  $\{Q_0,R_0,\ldots, Q_{r-1},R_{r-1}\}$. The process of fitting these together, making adjustments if necessary, and completing the tuples such that they give rise to the desired  directed 2-factorization, however, was largely accomplished by hand.

 \subsection*{Acknowledgements}
The first author was supported by the Australian Research Council (grants DP220102212 and DP240101048). The second author was supported by the Natural Sciences and Engineering Research Council of Canada (NSERC) Post Graduate Scholarship program and the NSERC Michael Smith Foreign Study Supplement program.

\appendix
 
\section{Supplementary material for the proof of Theorem~\ref{thm:twotab}}
\label{A:A} 

 \subsection{The case $t_1+q \equiv 2\ (\textrm{mod}\ 4)$}
  \label{sub:case1}
\noindent {\bf Case 1}: $t_1=4$ and $q=10$. See Figures \ref{fig:case1.1}--\ref{fig:case1.9} in Appendix \ref{A:B1}.

\begin{center}
$

\caption{The $(6, 18)$-base tuple $(X_6, Q_6, R_6, S_6, T_6)$.}
\label{fig:case8.7}
\end{figure}

\end{document}